\documentclass[12pt,leqno]{amsart}
\usepackage{amssymb,amsthm,amsmath,latexsym}
\newtheorem{theorem}{\sc Theorem}[section]
\newtheorem{lemma}[theorem]{\sc Lemma}
\newtheorem{proposition}[theorem]{\sc Proposition}

\frenchspacing
\date{}

\title{On Profinite Groups with Engel-like Conditions }
\author{Raimundo Bastos }
\author{Pavel Shumyatsky }
\address{ Department of Mathematics, University of Brasilia,
Brasilia-DF, 70910-900 Brazil }
\email{bastos@mat.unb.br}
\email{pavel@unb.br}
\thanks{The first author was partially supported by CAPES and CNPq-Brazil; the second author
was supported by FAPDF and CNPq-Brazil}
\subjclass[2010]{20E18; 20F45.}
\keywords{Profinite groups; Engel elements}

\begin{document}
\begin{abstract} Let $G$ be a profinite group in which for every element $x\in G$ there exists a natural number $q=q(x)$ such that $x^q$ is Engel. We show that $G$ is locally virtually nilpotent. Further, let $p$ be a prime and $G$ a finitely generated profinite group in which for every $\gamma_k$-value $x\in G$ there exists a natural $p$-power $q=q(x)$ such that $x^q$ is Engel. We show that $\gamma_k(G)$ is locally virtually nilpotent. \\

\end{abstract}

\maketitle

\section{Introduction}
The positive solution of the Restricted Burnside Problem \cite{ze1,ze2} had led to many remarkable results on profinite groups. In particular, using Wilson's reduction theorem \cite{wil}, Zelmanov has been able to prove local finiteness of profinite periodic groups \cite{ze}. Recall that a group is periodic if all of its elements have finite order. The group $G$ is said to have a certain property locally if any finitely generated subgroup of $G$ possesses that property. Another result that was deduced following the solution of the Restricted Burnside Problem is that any profinite Engel group is locally nilpotent \cite{wize}. For elements $x,y$ of $G$ we define $[x,{}_1y]=[x,y]$ and $[x,{}_{i+1}y]=[[x,{}_{i}y],y]$ for $i\geq1$. The group $G$ is called an Engel group if for all $x,y\in G$ there is an integer $n=n(x,y)$ such that $[x,{}_{n}y]=1$. An element $y\in G$ is called Engel if for any $x\in G$ there is an integer $n=n(x)$ such that $[x,{}_{n}y]=1$; and $y$ is called $n$-Engel if for any $x\in G$ we have $[x,{}_{n}y]=1$. 

In the present article we consider profinite groups in which a power of any element is Engel. Our result can be viewed as a common generalization of both of the above results.

\begin{theorem}\label{a} Let $G$ be a profinite group in which for every element $x\in G$ there exists a natural number $q=q(x)$ such that $x^q$ is Engel. Then $G$ is locally virtually nilpotent.
\end{theorem}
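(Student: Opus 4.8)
The plan is to reduce to the finitely generated case, extract uniform bounds by a Baire category argument, and then feed the resulting bounded Engel data into the theorems of Zelmanov and Wilson--Zelmanov. First I would observe that the hypothesis passes to every closed subgroup $H\le G$: if $x^q$ is Engel in $G$ then it is a fortiori Engel in $H$. Since every finitely generated subgroup of $G$ is itself a profinite group satisfying the hypothesis, and since a finitely generated locally nilpotent profinite group is nilpotent (the group being one of its own finitely generated subgroups), it suffices to prove that a finitely generated profinite group $G$ with the stated property is virtually nilpotent. So I assume $G$ finitely generated and aim to produce an open, hence finite-index, nilpotent subgroup; here I use that open subgroups of finitely generated profinite groups are again finitely generated, so that finding an open locally nilpotent subgroup is enough.

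The uniformizing step is a Baire category argument. For each $n$ the set $E_n=\{g\in G:[y,{}_n g]=1 \text{ for all } y\}$ of $n$-Engel elements is closed, being an intersection of preimages of $1$ under the continuous maps $g\mapsto[y,{}_n g]$; since the $q$-th power map is continuous, $Y_{q,n}:=\{x:x^q\in E_n\}$ is closed as well. The hypothesis says precisely that $G=\bigcup_{q,n\ge 1}Y_{q,n}$, a countable union of closed sets, so because $G$ is compact Hausdorff the Baire category theorem provides $q_0,n_0$ together with an element $g_0$ and an open normal subgroup $N$ with $g_0N\subseteq Y_{q_0,n_0}$; that is, $(g_0h)^{q_0}$ is $n_0$-Engel for every $h\in N$.

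It now remains to convert this coset-and-power condition into local nilpotency of a finite-index subgroup. Expanding $(g_0h)^{q_0}=g_0^{q_0}\,h^{g_0^{\,q_0-1}}\cdots h^{g_0}\,h$ (with $h^{g}=g^{-1}hg$) exhibits, for each $h\in N$, an explicit $n_0$-Engel element lying in the coset $g_0^{q_0}N$; taking $h=1$ shows in particular that $g_0^{q_0}$ is itself $n_0$-Engel. I would use this family, which behaves like the image of $N$ under a relative-norm map, to show that an open subgroup is generated by $n_0$-Engel elements, and then invoke Wilson--Zelmanov to deduce that this subgroup is locally nilpotent, while the torsion introduced by raising to the $q_0$-th power is controlled through Zelmanov's local finiteness theorem for periodic profinite groups. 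The presence of the fixed $n_0$-Engel element $g_0^{q_0}$, which cannot in general be absorbed, is precisely what forces the conclusion to be merely virtual nilpotency rather than outright nilpotency.

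The main obstacle is this last conversion: passing from ``a bounded power of each element of a coset is boundedly Engel'' to genuine local nilpotency of an open subgroup, since Engel-ness is not preserved under products and the norm-type map $h\mapsto h^{g_0^{\,q_0-1}}\cdots h$ need not have open image. I expect the cleanest route is to first treat the case in which $q_0=p^k$ is a prime power -- where one can localize at a Sylow pro-$p$ subgroup and apply Zelmanov's Lie-algebra methods, which is exactly the setting of the paper's second theorem -- and then recover an arbitrary $q_0$ by factoring it into coprime prime-power parts and intersecting the resulting finite-index nilpotent subgroups.
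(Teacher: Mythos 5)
There are two genuine gaps. The first is that your Baire category step rests on a false covering claim: the hypothesis does \emph{not} say that $G=\bigcup_{q,n}Y_{q,n}$. An Engel element $y$ satisfies $[x,{}_{n(x)}y]=1$ with $n(x)$ depending on $x$, so it need not be $n$-Engel for any fixed $n$; hence ``$x^q$ is Engel'' does not place $x$ in any of your closed sets $Y_{q,n}$. (The set of Engel elements is an uncountable intersection of countable unions of closed sets, not itself a countable union of closed sets.) The paper avoids this by applying Baire's theorem to the product $G\times G$ with the closed sets $S_{i,j}=\{(g_1,g_2):[g_1,{}_i\,g_2^{\,j}]=1\}$, which really do cover $G\times G$; the output is a law satisfied on a pair of cosets, and its only role is as input to the Wilson--Zelmanov theorem showing that the Lie algebra $L_p$ is PI --- not, as you intend, to manufacture boundedly Engel generators of an open subgroup.

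The second gap is the step you yourself flag as the main obstacle; it is exactly where the proof has to happen, and the route you sketch (using the norm-type expansion of $(g_0h)^{q_0}$ to generate an open subgroup by $n_0$-Engel elements) is not viable and is not what the paper does. The paper's argument runs as follows: for a finitely generated closed subgroup $H\le G$, let $K$ be the closed subgroup generated by \emph{all} Engel elements of $H$. Then $H/K$ is periodic, hence finite by Zelmanov's theorem on periodic profinite groups, so $K$ is open and finitely generated; moreover $K$ is pronilpotent because in every finite quotient the Engel elements lie in the Fitting subgroup. Proposition \ref{nq} (with $k=1$) then shows $K$ is virtually nilpotent via the Lie-theoretic machine: the coset law gives that $L_p$ is PI (Theorem \ref{6}); Lazard's identity $(\mathrm{ad}\,\tilde x)^p=\mathrm{ad}\,(\widetilde{x^p})$ combined with the Engel hypothesis makes the relevant homogeneous elements ad-nilpotent (Lemma \ref{45}); Zelmanov's Theorem \ref{1} gives nilpotency of the Lie algebra, hence $p$-adic analyticity (Theorem \ref{3}), hence virtual solubility (Tits), and Plotkin's theorem finally converts ``soluble and generated by Engel elements'' into nilpotency. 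Your sketch names some of these ingredients but does not assemble them, and the closing suggestion of factoring $q_0$ into coprime prime powers does not address the actual difficulty, which is structural (locating a pronilpotent open subgroup generated by Engel elements) rather than arithmetic.
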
 

We recall that a profinite group possesses a certain property virtually if it has an open subgroup with that property.

Given an integer $k\geq1$, the word $\gamma_{k}=\gamma_k(x_1,\dots,x_k)$ is defined inductively by the formulae
\[
\gamma_1=x_1,
\qquad \text{and} \qquad
\gamma_k=[\gamma_{k-1},x_k]=[x_1,\ldots,x_k]
\quad
\text{for $k\ge 2$.}
\]
The subgroup of a group $G$ generated by all values of the word $\gamma_k$ is denoted by $\gamma_k(G)$. Of course, this is the familiar $k$-th term of the lower central series of $G$. When $G$ is a profinite group, $\gamma_k(G)$ denotes the closed subgroup generated by all values of the word $\gamma_k$. It was shown in \cite{shu} that if $G$ is a finitely generated profinite group in which all $\gamma_k$-values are Engel, then $\gamma_k(G)$ is locally nilpotent. In the present paper we establish the following related result.

\begin{theorem}\label{b} Let $p$ be a prime and $G$ a finitely generated profinite group in which for every $\gamma_k$-value $x\in G$ there exists a natural $p$-power $q=q(x)$ such that $x^q$ is Engel. Then $\gamma_k(G)$ is locally virtually nilpotent.
\end{theorem}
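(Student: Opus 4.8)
The overall strategy parallels the proof of Theorem~\ref{a}, with the theorem of \cite{shu} playing the part that the Wilson--Zelmanov theorem plays there. The point is to upgrade the \emph{power}-Engel hypothesis on $\gamma_k$-values to a genuine Engel condition on an open subgroup, and the loss incurred in this upgrade is precisely the word \emph{virtually} in the conclusion. I would first record the routine reductions. Since $G$ is finitely generated, $\gamma_k(G)$ is a topologically finitely generated profinite group, and it suffices to produce an open normal subgroup $U\trianglelefteq G$ in which every $\gamma_k$-value is Engel: by \cite{shu}, $\gamma_k(U)$ is then locally nilpotent, and since $[G:U]<\infty$ a finite-index argument upgrades this to $\gamma_k(G)$ being locally virtually nilpotent. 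I would also record that, by Baer's description of the Engel elements of a finite group as its Fitting subgroup, the image of an Engel element in every finite continuous quotient lies in the Fitting subgroup; as the Fitting subgroup is normal and nilpotent, powers and conjugates of Engel elements are Engel, and for finitely many $\gamma_k$-values $a_1,\dots,a_n$ one may fix a single $p$-power $q=p^{b}$ with all $a_i^{\,q}$ Engel.

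The structural heart is a decomposition of each $\gamma_k$-value $x$ at the prime $p$. Fixing a $p$-power $q$ with $x^{\,q}$ Engel and writing the procyclic group $\overline{\langle x\rangle}=C_p\times C_{p'}$ as the product of its pro-$p$ and pro-$p'$ parts gives $x=x_px_{p'}$. Since raising to the $p$-power $q$ is a topological automorphism of $C_{p'}$, the prime-to-$p$ part $x_{p'}$ lies in $\overline{\langle x^{\,q}\rangle}$; as $x^{\,q}$ is Engel and the whole procyclic group generated by an Engel element consists of Engel elements, $x_{p'}$ is \emph{Engel}. The pro-$p$ part $x_p$ carries the remaining obstruction, but its $q$-th power $x_p^{\,q}$ is again Engel. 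In every finite continuous quotient $\bar G$ this places $\bar x_{p'}$ and $\bar x_p^{\,q}$ in $F(\bar G)$, so that modulo the Fitting subgroup each $\gamma_k$-value becomes a $p$-element; passing through the finite quotients, $\gamma_k(G)$ thus acquires a closed normal subgroup with pro-$p$ quotient that carries all the non-Engel behaviour, confined to the single prime $p$.

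The crux is then twofold. First, one must manufacture the open subgroup $U$ on which the $\gamma_k$-values are \emph{genuinely} Engel. The difficulty is that the Engel property of an element of a profinite group is not detected by the finite quotients alone --- the integer $n$ in $[x,{}_ny]=1$ must be uniform --- so the Engel parts $x_{p'}$ give only ``finite-quotient'' information, and turning this into a uniform Engel condition requires local finiteness. This is exactly where Zelmanov's theorem on periodic profinite groups and the solution of the Restricted Burnside Problem enter: the images of the $\gamma_k$-values in the pro-$p$ quotient have finite order (their $q$-th powers being Engel, hence absorbed), so the relevant finitely generated pro-$p$ sections are periodic, hence finite, which both bounds the index $[G:U]$ and forces the complementary Engel behaviour to be uniform. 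Second, once $U$ is in hand, \cite{shu} yields that $\gamma_k(U)$ is locally nilpotent, and the finite-index bookkeeping relating $\gamma_k(U)$ to $\gamma_k(G)$ completes the argument.

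I expect the main obstacle to be the first half of this crux: extracting a uniform, honest Engel condition on an open subgroup from the merely power-Engel and ``Fitting-modulo'' data, via the periodicity of the pro-$p$ sections. It is here that the single-prime hypothesis is indispensable, since it is what confines the entire non-Engel behaviour of the $\gamma_k$-values to one pro-$p$ quotient, where Zelmanov's periodicity results can be brought to bear.
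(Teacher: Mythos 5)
Your plan correctly isolates some of the right ingredients --- the $p$-part/$p'$-part decomposition of a $\gamma_k$-value, the fact that a subgroup generated by finitely many $\gamma_k$-values is a finite $p$-group modulo its Engel elements, and the role of \cite{shu} --- but the two steps you yourself flag as the crux are genuinely missing, and the first is aimed at a statement that is too strong. There is no reason to expect an open subgroup $U$ of $G$ in which every $\gamma_k$-value is honestly Engel: already for $k=1$, a virtually nilpotent pro-$p$ group in which every element has an Engel power need not contain any open subgroup consisting of Engel elements, which is precisely why the conclusion of both theorems is only locally \emph{virtually} nilpotent. The paper never performs this reduction; instead it fixes finitely many $\gamma_k$-values $a_1,\dots,a_t$, sets $H=\langle a_1,\dots,a_t\rangle$ and lets $K$ be generated by the Engel elements of $H$, and proves directly that $K$ is nilpotent. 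Passing from ``generated by Engel elements'' to ``nilpotent'' requires a \emph{uniform} Engel bound so that Plotkin's theorem applies, and finiteness of $H/K$ does not supply one; the paper obtains it from Baire's category theorem (Lemma \ref{pi-n}) applied to an explicit finite generating set of $O_{p'}(H)$ manufactured via Lemmas \ref{clo} and \ref{pdash} (conjugates of the elements $[g_j,{}_{k-1}h_l]^{q_{jl}}$ together with the $b_i^{q_i}$), and even then only on $O_{\pi'}(K)$ for a cofinite set of primes, the finitely many excluded primes being handled separately.

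The second, larger omission is the pro-$p$ case itself. Confining the non-Engel behaviour to the prime $p$ does not let Zelmanov's periodicity theorem finish the job: the Sylow $p$-subgroup of $K$ is generated by Engel elements but still has to be shown nilpotent, and the finitely generated pro-$p$ group $H$ has to be shown virtually nilpotent. This is where the paper's Lie-theoretic machinery is indispensable and your sketch offers no substitute: a Baire category argument produces a coset law, Theorem \ref{6} of Wilson--Zelmanov makes $L_p(H)$ PI, Lemma \ref{45} makes commutators in the generators ad-nilpotent, Zelmanov's Theorem \ref{1} gives nilpotency of the Lie algebra, Lazard's Theorem \ref{3} gives $p$-adic analyticity, and the Tits alternative together with Plotkin's theorem then yield virtual nilpotency (this is the final part of the proof of Proposition \ref{nq}, which the paper invokes verbatim when $r=p$). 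Without this chain the theorem does not follow from periodicity and \cite{shu} alone.
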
 

We do not know whether the hypothesis that $G$ is finitely generated is really necessary in Theorem \ref{b}. The proof that we present here uses this assumption in a very essential way. Another natural question arising in the context of Theorem \ref{b} is whether the theorem remains valid with $q$ allowed to be an arbitrary natural number rather than $p$-power. This is related to the conjecture that if $G$ is a finitely generated profinite group in which every $\gamma_k$-value has finite order, then $\gamma_k(G)$ is locally finite (cf. \cite{shu}). Using the results obtained in \cite{shu} one can easily show that if $G$ is a finitely generated profinite group in which every $\gamma_k$-value has finite $p$-power order, then indeed $\gamma_k(G)$ is locally finite. This explains why Theorem \ref{b} is proved only in the case where $q$ is a $p$-power.

\section{Preliminary results}
As usual, if $\pi$ is a set of primes, we denote by $\pi'$ the set of all primes that do not belong to $\pi$. For a profinite group $G$ we denote by $\pi(G)$ the set of prime divisors of the orders of elements of $G$ (understood as supernatural, or Steinitz, numbers). If a profinite group $G$ has $\pi (G)=\pi$, then we say that $G$ is a pro-$\pi$ group. The maximal normal pro-$\pi$ subgroup of $G$ is denoted by $O_\pi(G)$. Recall that Sylow theorems hold for $p$-Sylow subgroups of a profinite group (see, for example, \cite[Ch.~2]{wil-book}). When dealing with profinite groups we consider only continuous homomorphisms and quotients by closed normal subgroups. If $X$ is a subset of a profinite group $G$, the symbol $\langle X\rangle$ stands for the closed subgroup generated by $X$. The symbol $\langle X^G\rangle$ stands for the normal closed subgroup generated by $X$. Throughout the paper a profinite pronilpotent group is just called pronilpotent. Of course, a pronilpotent group is a Cartesian product of its Sylow subgroups. 

\begin{lemma}\label{clo} Let $p$ be a prime and $G$ a pro-$p'$ group admitting an automorphism $a$ of finite $p$-power order. Then $\langle[x,a]^G\rangle=\langle[x,{}_ka]^G\rangle$ for any positive integer $k$ and any $x\in G$.
\end{lemma}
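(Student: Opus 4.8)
The plan is to reduce to finite quotients and then to exploit that $a$ acts \emph{coprimely}: its order is a power of $p$, while $G$ is a pro-$p'$ group. Throughout I work inside the semidirect product $\Gamma=G\rtimes\langle a\rangle$, where the normal closures in the statement are understood (so that $a$ acts on the relevant quotients); since $G\trianglelefteq\Gamma$ and each $a$-commutator $[x,{}_{j}a]$ lies in $G$, the subgroups $N_j:=\langle[x,{}_{j}a]^{\Gamma}\rangle$ are contained in $G$, normal in $G$, and $a$-invariant. Because equality of two closed subgroups can be tested modulo every open normal subgroup, and all the data passes to finite quotients, it suffices to prove $N_1=N_k$ when $G$ is finite.

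The inclusion $N_k\subseteq N_1$ is the easy half. As $N_1$ contains $[x,a]$ and is $a$-invariant, it also contains $[x,a]^{a}$, hence the element $[x,{}_{2}a]=[x,a]^{-1}[x,a]^{a}$. Iterating, $[x,{}_{j}a]\in N_1$ for every $j$, and in particular $N_k\subseteq N_1$. Note that $a$-invariance of the closure is exactly what makes this step go through, so working inside $\Gamma$ rather than in $G$ alone is essential already here.

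For the reverse inclusion $N_1\subseteq N_k$ I pass to $\bar G=G/N_k$. Since $N_k$ is $a$-invariant, $a$ induces an automorphism of $\bar G$ of $p$-power order, and by construction $[\bar x,{}_{k}\bar a]=1$; it then remains to deduce $[\bar x,\bar a]=1$, for this gives $[x,a]\in N_k$ and hence $N_1\subseteq N_k$. Here coprimality enters decisively. On any elementary abelian $a$-invariant section the operator induced by $a$ has minimal polynomial dividing $t^{p^{n}}-1$, which is separable in characteristic $\neq p$; thus $a$ acts semisimply, and a relation $(a-1)^{k}v=0$ already forces $(a-1)v=0$. In other words, on each such section the $k$-fold Engel condition collapses to the $1$-fold one.

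The main obstacle is to propagate this sectionwise semisimplicity to the genuine, possibly non-nilpotent $p'$-group $\bar E=\langle\bar x^{\langle\bar a\rangle}\rangle$ and conclude $[\bar x,\bar a]=1$. I would argue by induction on $|\bar G|$ using the standard machinery of coprime action: the decomposition $\bar E=[\bar E,a]\,C_{\bar E}(a)$, the identity $[\bar E,a]=[\bar E,a,a]$, and passage to a minimal $a$-invariant normal subgroup, so as to reduce to the elementary abelian situation treated above. This transfer from abelian sections to the full group, rather than the bookkeeping with normal closures, is where the hypotheses that $a$ has $p$-power order and that $G$ is pro-$p'$ are genuinely used.
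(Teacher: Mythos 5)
Your setup (working in $\Gamma=G\rtimes\langle a\rangle$, reducing to finite quotients) and your proof of the easy inclusion are fine; indeed your insistence on $a$-invariance of the normal closures is more careful than the paper, which simply calls that direction obvious. The reduction of the hard inclusion to the implication ``$[\bar x,{}_k\bar a]=1\Rightarrow[\bar x,\bar a]=1$'' in $\bar G=G/N_k$ also matches the paper. The gap is in your final paragraph. Your sectionwise semisimplicity argument proves the implication only when $\bar x$ itself lies in an elementary abelian $a$-invariant subgroup, and the proposed induction does not reduce the general case to that one. Concretely: if $V$ is an $a$-invariant normal subgroup with $\bar G/V$ handled by induction, you learn $u:=[\bar x,\bar a]\in V$, and then $[\bar x,{}_k\bar a]=[u,{}_{k-1}\bar a]=(a-1)^{k-1}u$ inside $V$ (when $V$ is abelian). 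Semisimplicity of $a$ on $V$ then kills only $(a-1)^{k-1}$, yielding $u\in C_V(a)$ --- \emph{not} $u=1$. No amount of module-theoretic information about sections can rule out a nontrivial fixed point $u$; what rules it out is that $u$ is specifically a commutator $[\bar x,\bar a]$. (A secondary issue: minimal $a$-invariant normal subgroups of a $p'$-group need not be elementary abelian unless you first arrange solubility, which you do not.)

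The missing idea is exactly the paper's one-line punch, and some form of it is unavoidable. Once you know $u=[\bar x,\bar a]$ is a (nontrivial, if the claim fails) $p'$-element centralized by $a$, observe that $ua^{-1}=\bar x^{-1}a^{-1}\bar x=(a^{-1})^{\bar x}$ is conjugate to $a^{-1}$ in $\Gamma$ and hence has $p$-power order; but $u$ and $a^{-1}$ commute and have coprime orders, so the order of $ua^{-1}$ is divisible by every prime dividing the order of $u$, a contradiction unless $u=1$. (Equivalently: $u\in C(a)$ gives $\bar x^{a^i}=\bar x u^i$, so $u^{p^n}=1$ where $p^n$ is the order of $a$, forcing the $p'$-element $u$ to be trivial.) Your argument, as written, reduces $k$ to $2$ but never closes the case $k=2$; with this observation added, your induction becomes superfluous and the proof collapses to essentially the paper's argument.
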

\begin{proof} Fix $k$ and $x\in G$. Set $M=\langle[x,a]^G\rangle$ and $N=\langle[x,{}_ka]^G\rangle$. It is obvious that $M$ contains $N$ and so we only need to show that $M\leq N$. We can pass to the quotient $G/N$ and without loss of generality assume that $[x,{}_ka]=1$. Thus, we need to show that $[x,a]=1$. Assume that $[x,a]\neq1$ and let $j$ be the minimal integer such that $[x,{}_ja]=1$. Set $y=[x,{}_{j-2}a]$ with the assumption that $y=x$ if $j=2$. Thus, we have $[y,a]\neq1$ but $[y,a,a]=1$. Since $[y,a]$ is a nontrivial $p'$-element commuting with $a$, the order of $[y,a]a^{-1}$ cannot be a $p$-power. On the other hand, it is clear that $[y,a]a^{-1}=a^{-y}$ is a conjugate of $a^{-1}$ and so the order of $[y,a]a^{-1}$ must be a $p$-power. This is a contradiction.
\end{proof}

\begin{lemma}\label{coen} Let $G$ be a pronilpotent group and assume that  $b$ and $c$ are elements of coprime orders in $G$. Set $a=bc$. Then $a$ is an $n$-Engel element if and only if so are both $b$ and $c$. The element $a$ is Engel if and only if both $b$ and $c$ are Engel.
\end{lemma}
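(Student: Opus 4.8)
The plan is to exploit the structural fact, recalled in the paragraph preceding the lemma, that a pronilpotent group $G$ is the Cartesian product of its Sylow subgroups, $G=\prod_p G_p$, and that the hypothesis of coprime orders forces $b$ and $c$ to be supported on complementary sets of these factors. Write $x=(x_p)_p$ for the decomposition of an element into its $p$-components. Let $\pi$ (respectively $\rho$) be the set of primes dividing the order of $b$ (respectively $c$); coprimality gives $\pi\cap\rho=\emptyset$. Then $b_p=1$ for $p\notin\pi$ and $c_p=1$ for $p\notin\rho$, so $b$ and $c$ lie in complementary direct factors, they commute, and the $p$-component $a_p$ of $a=bc$ equals $b_p$ for $p\in\pi$, equals $c_p$ for $p\in\rho$, and is trivial for all remaining primes.

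The key observation is that iterated commutators are computed componentwise: the $p$-component of $[x,{}_na]$ is $[x_p,{}_na_p]$. Since the product is Cartesian, the components $x_p$ may be chosen independently, and hence an element of $G$ is $n$-Engel precisely when each of its components is $n$-Engel in the corresponding Sylow subgroup, with the trivial element counting as (vacuously) Engel. Combining this with the description of $a_p$ above immediately yields the $n$-Engel equivalence: $a$ is $n$-Engel if and only if every $a_p$ is $n$-Engel, which holds if and only if $b_p$ is $n$-Engel for all $p\in\pi$ and $c_p$ is $n$-Engel for all $p\in\rho$, that is, if and only if both $b$ and $c$ are $n$-Engel.

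For the Engel statement, where the bound is allowed to depend on $x$, I would run the same componentwise reduction in both directions. If $a$ is Engel, then for a fixed $x$ a single exponent $n=n(x)$ satisfies $[x_p,{}_na_p]=1$ in every component; restricting attention to the primes $p\in\pi$ gives $[x,{}_nb]=1$, so $b$ is Engel, and the primes $p\in\rho$ give that $c$ is Engel. Conversely, assuming $b$ and $c$ are Engel, fix $x$ and choose $n_1,n_2$ with $[x,{}_{n_1}b]=1$ and $[x,{}_{n_2}c]=1$; setting $n=\max\{n_1,n_2\}$ forces $[x_p,{}_na_p]=1$ in each component and hence $[x,{}_na]=1$, so $a$ is Engel.

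The whole argument is therefore a coordinatewise bookkeeping once the product decomposition is in place. The only mildly delicate point, which plays the role of the main obstacle, is the passage in the Engel case from the two separate and $x$-dependent bounds for $b$ and $c$ to a single common bound for $a$; this is resolved simply by taking the maximum of the two exponents, using that $[y,{}_mg]=1$ implies $[y,{}_{m'}g]=1$ for all $m'\ge m$.
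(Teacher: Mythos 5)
Your proof is correct and follows essentially the same route as the paper: both arguments rest on writing the pronilpotent group as the Cartesian product of its Sylow subgroups, observing that $b$ and $c$ are supported on disjoint sets of Sylow factors so that $a$ agrees with $b$ on one set and with $c$ on the other, and then computing the iterated commutators componentwise. Your treatment of the Engel case (taking the maximum of the two exponents) just makes explicit what the paper dismisses as an ``obvious modification.''
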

\begin{proof} Assume that $a$ is $n$-Engel. Choose a Sylow $p$-subgroup $P$ of $G$. It is clear that if $p$ does not divide the order of $b$, then $[P,b]=1$. Assume that $p$ is a divisor of the order of $b$. In this case $p$ is coprime with the order of $c$ and so $[P,c]=1$. Therefore $[P,b]=[P,a]$ and $[P,{}_nb]=[P,{}_na]=1$. This happens for every choice of a Sylow subgroup $P$ of $G$. Since $G$ is pronilpotent, it follows that $[G,{}_nb]=1$. By a symmetric argument, $[G,{}_nc]=1$. Thus, if $a$ is $n$-Engel, then so are both $b$ and $c$.

Now assume that both $b$ and $c$ are $n$-Engel. Let $\pi_1$ be the set of primes dividing the order of $b$ and $\pi_2$ the set of primes dividing the order of $c$. Given $x\in G$ write $x=x_1x_2x_3$, where $x_1$ is a $\pi_1$-element, $x_2$ is a $\pi_2$-element and $x_3$ whose order is not divisible by primes in $\pi_1\cup\pi_2$. Since $G$ is pronilpotent, the elements  $x_1,x_2,x_3$ are uniquely determined by $x$. We have $$[x,{}_na]=[x_1,{}_nb]=[x_2,{}_nc]=1.$$ Thus, $a$ is $n$-Engel.

Further, using only obvious modifications of the above argument one can show that $a$ is Engel if and only if both $b$ and $c$ are Engel.
\end{proof}
\begin{lemma} \label{BBT}
Let $p$ be a prime and $G$ a pro-$p$ group generated by a set $X$. Suppose that $G$ can be generated by $m$ elements. Then there exist elements $x_1,\ldots,x_m \in X$ such that $G =\langle x_1,\ldots,x_m \rangle$.
\end{lemma}
\begin{proof}
A simple inverse limit argument shows that without loss of generality we can assume that $G$ is a finite $p$-group. Now existence of the elements $x_1,\ldots,x_m \in X$ is immediate from Bursnide Basis Theorem \cite[5.3.2]{Rob}.
\end{proof}
\begin{lemma}\label{mn} Let $m,n$ be positive integers and $G$ an $m$-generated pronilpotent group. Suppose that $G$ can be generated by $n$-Engel elements. Then $G$ can be generated by $m$ elements each of which is $n$-Engel. 
\end{lemma}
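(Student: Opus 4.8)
The plan is to exploit the decomposition of the pronilpotent group $G$ as the Cartesian product $G=\prod_p G_p$ of its Sylow subgroups $G_p$, to solve the problem in each factor, and then to recombine the resulting generators across the primes. Two preliminary observations are needed. First, each $G_p$ is a continuous quotient of $G$ (namely $G/\prod_{q\neq p}G_q$), so it can be generated by $m$ elements. Second, if $\phi\colon G\to K$ is a continuous epimorphism and $y\in G$ is $n$-Engel, then $\phi(y)$ is $n$-Engel, because every $w\in K$ is of the form $w=\phi(z)$ and hence $[w,{}_n\phi(y)]=\phi([z,{}_ny])=1$. In particular the projection $\pi_p\colon G\to G_p$ carries $n$-Engel elements to $n$-Engel elements.

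Now I would fix a generating set $X$ of $G$ consisting of $n$-Engel elements. For each prime $p$ the set $\pi_p(X)$ generates $G_p$ and consists of $n$-Engel elements of $G_p$. Since $G_p$ is a pro-$p$ group that can be generated by $m$ elements, Lemma \ref{BBT} lets me choose $x_1^{(p)},\dots,x_m^{(p)}\in\pi_p(X)$ with $G_p=\langle x_1^{(p)},\dots,x_m^{(p)}\rangle$, each $x_i^{(p)}$ being $n$-Engel in $G_p$.

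Next I recombine these local generators into global ones. For $i=1,\dots,m$ put $g_i=\prod_p x_i^{(p)}\in\prod_p G_p=G$, so that $\pi_p(g_i)=x_i^{(p)}$ for every $p$. Because the operations in a Cartesian product are componentwise, for any $x\in G$ the element $[x,{}_ng_i]$ has $p$-component $[\pi_p(x),{}_nx_i^{(p)}]=1$; hence $[x,{}_ng_i]=1$ and each $g_i$ is $n$-Engel. (This is precisely the argument of Lemma \ref{coen}, extended from two coprime components to the full primary decomposition.) It remains to verify that $H:=\langle g_1,\dots,g_m\rangle$ equals $G$. By construction $\pi_p(H)=\langle x_1^{(p)},\dots,x_m^{(p)}\rangle=G_p$ for every $p$. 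On the other hand $H$ is pronilpotent, being a closed subgroup of $G$, so $H=\prod_p(H\cap G_p)$ and therefore $\pi_p(H)=H\cap G_p$. Comparing, $H\cap G_p=G_p$, i.e. $G_p\leq H$, for every $p$; as the closed subgroup generated by all the $G_p$ is $G$, I conclude $H=G$.

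The genuinely routine verifications aside, the step I expect to be the crux is the double reliance on the Sylow decomposition. It is pronilpotency that makes the componentwise commutator identity available, so that recombining $n$-Engel components yields an $n$-Engel element; and it is again pronilpotency that forces a closed subgroup surjecting onto every $G_p$ to be the whole of $G$. Without the pronilpotent structure neither the Engel transfer nor the reconstruction of $G$ from surjections onto its Sylow quotients would go through.
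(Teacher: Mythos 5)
Your proof is correct and follows essentially the same route as the paper's: decompose $G$ into its Sylow subgroups, project the Engel generating set, select $m$ generators in each factor via Lemma \ref{BBT}, and recombine across the primes. The only cosmetic differences are that you justify the Engel property of the projections by a direct epimorphism argument where the paper cites Lemma \ref{coen}, and that you spell out the (genuinely easy) verification that the recombined elements generate $G$, which the paper leaves as ``clear.''
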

\begin{proof} Since $G$ is pronilpotent, $G$ is the direct product $P_1\times P_2\times\dots$, where the $P_i$ are Sylow subgroups of $G$. Let $X$ be a set of $n$-Engel elements such that $G$ is generated by $X$. For every element $x\in X$ write $x=x_1x_2\dots$, where $x_i$ is the projection of $x$ on $P_i$. Set $X_i=\{x_i\ \vert\ x\in X\}$. Thus, $X_i$ is the set of all $x_i$ where $x$ ranges over the set $X$. By Lemma \ref{coen} each set $X_i$ consists of $n$-Engel elements. It is clear that $P_i$ is generated by the set $X_i$. Since each Sylow subgroup $P_i$ can be generated by at most $m$ elements, Lemma \ref{BBT} implies that $P_i$ can be generated by at most $m$ elements from $X_i$. In each set $X_i$ choose $m$ (not necessarily distinct) elements $x_{i1},\dots,x_{im}$ such that $P_i=\langle x_{i1},\dots,x_{im}\rangle$. For every $j=1,\ldots,m$ we let $y_j$ be the product of $x_{ij}$, where $i$ ranges through the positive integers. By Lemma \ref{coen}, the elements $y_j$ are $n$-Engel. It is clear that $G=\langle y_1,\dots,y_m\rangle$ and so the lemma follows.

\end{proof}
\begin{lemma}\label{pi-n}
Let $a$ be an Engel element of a pronilpotent group $G$. Then there exist a positive integer $n$ and a finite set of primes $\pi$ such that $$ [O_{\pi'}(G), {}_n a] = 1.$$
\end{lemma}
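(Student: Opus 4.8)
The plan is to exploit the decomposition of the pronilpotent group $G$ as the Cartesian product of its Sylow subgroups, $G=\prod_p P_p$, and to write $a=(a_p)_p$ with $a_p\in P_p$. Since elements of distinct Sylow subgroups commute, the commutator is computed componentwise: for $x=(x_p)_p$ one has $[x,{}_m a]=([x_p,{}_m a_p])_p$, and likewise $[O_{\pi'}(G),{}_m a]=\prod_{p\notin\pi}[P_p,{}_m a_p]$ as closed subgroups. Thus the statement reduces to producing a single integer $n$ and a finite set of primes $\pi$ for which $[P_p,{}_n a_p]=1$ for every $p\notin\pi$.

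The heart of the argument, and the step I expect to be the main obstacle, is to extract a \emph{uniform} Engel degree on all but finitely many primes out of the bare hypothesis that $a$ is Engel. I would argue by contradiction. Suppose that for every integer $n$ the set $\{p : [z,{}_n a_p]\neq 1 \text{ for some } z\in P_p\}$ is infinite; note that this set shrinks as $n$ grows. Then I would choose pairwise distinct primes $q_1,q_2,\dots$, with $q_N$ lying in the $N$-th of these sets, together with witnesses $z_N\in P_{q_N}$ satisfying $[z_N,{}_N a_{q_N}]\neq 1$, and assemble the element $x=(x_p)_p$ whose $q_N$-component is $z_N$ and whose other components are trivial. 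The componentwise formula then gives $[x,{}_N a]\neq 1$ for every $N$, contradicting the assumption that $a$ is an Engel element. Hence there exist $n$ and a finite $\pi$ such that $[z,{}_n a_p]=1$ for all $z\in P_p$ whenever $p\notin\pi$; that is, $a_p$ is an $n$-Engel element of $P_p$.

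It remains to upgrade this pointwise vanishing to the vanishing of the commutator subgroup $[P_p,{}_n a_p]$, and this is where the pronilpotence of $G$ enters essentially. I would pass to an arbitrary finite quotient $Q$ of $P_p$, which is a finite $p$-group. Using the standard fact that each term $[Q,{}_k a_p]$ is generated, modulo $[Q,{}_{k+1}a_p]$, by the straight commutators $[x,{}_k a_p]$, the relation $[z,{}_n a_p]=1$ for all $z$ forces $[Q,{}_n a_p]=[Q,{}_{n+1}a_p]$, hence $[Q,{}_n a_p]=[Q,{}_m a_p]$ for all $m\ge n$; since $Q$ is nilpotent this common value is trivial. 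As this holds in every finite quotient, $[P_p,{}_n a_p]=1$. (Equivalently, the subgroup $D=[P_p,{}_n a_p]$ then satisfies $D=[D,a_p]$, which is impossible for a nontrivial pro-$p$ group $D$ acted on by the pro-$p$ element $a_p$.) Substituting into $[O_{\pi'}(G),{}_n a]=\prod_{p\notin\pi}[P_p,{}_n a_p]$ yields the claim.

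I expect the delicate points to be, first, the verification that the Cartesian product structure really makes both the element commutators and the subgroup commutators computable componentwise, so that an infinite family of local witnesses genuinely produces a single global non-Engel element; and second, the passage from the $n$-Engel identity to the triviality of the commutator subgroup, where the nilpotence of the finite quotients of $P_p$ is used to terminate the descending chain $[P_p,{}_n a_p]\supseteq[P_p,{}_{n+1}a_p]\supseteq\cdots$.
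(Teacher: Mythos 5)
Your main argument is correct, and it proves the lemma by a genuinely different route from the paper. The paper covers $G$ by the closed sets $X_i=\{b\in G:\ [b,{}_i a]=1\}$ and applies Baire's category theorem to find $n$, an open subgroup $H$ and an element $b$ with $[bh,{}_na]=1$ for all $h\in H$; taking $\pi$ to be the primes dividing $[G:H]$, writing $G=J\times K$ with $K=O_{\pi'}(G)\leq H$ and $b\in J$, the identity $1=[bx,{}_na]=[b,{}_na][x,{}_na]$, whose two factors lie in the disjoint subgroups $J$ and $K$, forces $[x,{}_na]=1$ on $K$. You instead extract the uniform Engel length directly from the Sylow decomposition by a diagonal argument: if every $n$ failed on infinitely many primes, an element of the Cartesian product supported on infinitely many Sylow subgroups would have no finite Engel length against $a$. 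Both arguments are sound; yours is more elementary (no Baire category), while the paper's is an instance of the Baire-category template it reuses in Propositions \ref{prop} and \ref{nq}. The points you single out as delicate --- that commutators are computed componentwise in the Cartesian product, and that the sets of bad primes are nested so that pairwise distinct $q_N$ with witnesses can be chosen --- are exactly the points that need checking, and they are fine.

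Your second paragraph, however, is both unnecessary and the one shaky step. As the paper proves the lemma (``Therefore $[x,{}_na]=1$ for any $x\in K$. The result follows.'') and as it later uses it (``$n$-Engel in their action on $O_{\pi'}(K)$'' in the proof of Theorem \ref{b}), the conclusion $[O_{\pi'}(G),{}_na]=1$ is the pointwise statement that $a$ is $n$-Engel on $O_{\pi'}(G)$; compare Lemma \ref{coen}, where $[G,{}_nb]=1$ is synonymous with ``$b$ is $n$-Engel''. That is exactly what your first argument delivers, so you should stop there. The ``standard fact'' you invoke to upgrade this to triviality of the iterated commutator subgroup --- that $[Q,{}_ka]$ is generated modulo $[Q,{}_{k+1}a]$ by the straight commutators $[x,{}_ka]$ --- is not standard and does not follow from the identity $[uv,a]=[u,a]^v[v,a]$ as suggested: that expansion produces conjugates $[x,{}_{k}a]^v$ with $v$ ranging over $[Q,{}_{k-1}a]$, and the correction terms $[[x,{}_{k}a],v]$ lie in $[[Q,{}_{k}a],[Q,{}_{k-1}a]]$, which there is no reason to place inside $[Q,{}_{k+1}a]$ (nor is $[Q,{}_{k+1}a]$ obviously normal in the relevant subgroups). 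If you ever need the subgroup version with the same $n$, it requires a genuine argument; for this lemma, simply delete the paragraph.
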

\begin{proof} For each positive integer $i$ we set $$X_i=\{b\in G;\ [b,{}_i a] = 1 \}.$$  The sets $X_i$ are closed in $G$ and cover $G$. Therefore, by Baire's category theorem \cite[p. 200]{ke}, at least one of these sets contains non-empty interior. Hence, there exist a positive integer $n$, an open subgroup $H$ and $b\in G$ such that $[bh,{}_na]=1$ for every $h\in H$. Let $m$ be the index of $H$ in $G$, and let $\pi$ be the set of primes dividing $m$. Let $J$ be the product of the Sylow subgroups of $G$ corresponding to primes in $\pi$ and $K=O_{\pi'}(H)=O_{\pi'}(G)$. So $G=J\times K$ and hence without loss of generality we can assume that $b\in J$. Note that, $1=[bx,{}_na]=[b, {}_na][x,{}_na]$ for every $x\in K$. Therefore $[x,{}_na]=1$ for any $x\in K$. The result follows.   
\end{proof}

\begin{lemma}\label{pdash} Let $p$ be a prime and $k$ a positive integer. Assume that $G$ is a profinite group such that $G=O_{p'}(G)P$, where $P$ is a $p$-Sylow subgroup. Suppose that $O_{p'}(G)=\langle g_1,\dots,g_s\rangle$ and $P=\langle h_1,\dots,h_t\rangle$. Then $[O_{p'}(G),P]$ is generated by conjugates of commutators $[g_i,{}_kh_j]$ for $1\leq i\leq s$ and $1\leq j\leq t$.
\end{lemma}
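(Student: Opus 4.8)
The plan is to reduce the statement to the case of finite groups and then recover the profinite assertion by an inverse limit argument. Write $N=O_{p'}(G)$ and let $M$ be the closed normal subgroup of $G$ generated by all $G$-conjugates of the iterated commutators $[g_i,{}_kh_j]$ ($1\le i\le s$, $1\le j\le t$); this is precisely the subgroup whose description we are after, so the goal is to prove $M=[N,P]$. First I would record that $[N,P]$ is normal in $G$: since $N\trianglelefteq G$ and $G=NP$, the identity $[ab,c]=[a,c]^b[b,c]$ (taken with $a,b\in N$) shows that $[N,P]$ is invariant under conjugation by $N$, while invariance under $P$ is immediate from $[n,h]^{h'}=[n^{h'},h^{h'}]$, so $[N,P]\trianglelefteq NP=G$. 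In particular each $[g_i,{}_kh_j]$ lies in $[N,{}_kP]\subseteq[N,P]$, and since $[N,P]$ is normal this gives $M\le[N,P]$. The whole difficulty is the reverse inclusion $[N,P]\le M$.

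For the reverse inclusion I would pass to finite quotients. Because $N$ is pro-$p'$ and $P$ is pro-$p$, in every finite quotient $\bar G=G/U$ (with $U$ open normal in $G$) the image $\bar N=NU/U$ is a normal $p'$-subgroup, $\bar P=PU/U$ is a Sylow $p$-subgroup (its index $[G:PU]$ divides $[G:P]=\lvert N\rvert$, a $p'$-number), $\bar G=\bar N\bar P$, and these images are generated by the images of the $g_i$ and the $h_j$ respectively. So it suffices to prove the finite version: if $G=NP$ is finite with $N$ a normal $p'$-subgroup, $P$ a Sylow $p$-subgroup, $N=\langle g_1,\dots,g_s\rangle$ and $P=\langle h_1,\dots,h_t\rangle$, then $[N,P]=M$. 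In the finite setting I would combine two facts. The standard description of commutator subgroups by generators gives $[N,P]=\langle[g_i,h_j]^G\mid i,j\rangle$: this follows from commutator identities (cf. \cite{Rob}), since passing to $G$ modulo the normal closure of the $[g_i,h_j]$ makes every $\bar g_i$ commute with every $\bar h_j$, whence $[\bar N,\bar P]=1$. Thus it is enough to show $[g_i,h_j]\in M$ for all $i,j$. Here Lemma \ref{clo} enters: since $\bar P$, and hence each $h_j$, acts on the $p'$-group $N$ as an automorphism of finite $p$-power order, Lemma \ref{clo} yields $\langle[g_i,h_j]^N\rangle=\langle[g_i,{}_kh_j]^N\rangle$, so that $[g_i,h_j]\in\langle[g_i,{}_kh_j]^N\rangle\le M$. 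Because $M$ is normal in $G$, all $G$-conjugates of $[g_i,h_j]$ also lie in $M$, and therefore $[N,P]=\langle[g_i,h_j]^G\mid i,j\rangle\le M$, giving equality in the finite case.

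Finally I would patch the finite quotients together. The image of $M$ in each $\bar G=G/U$ is the normal closure of the elements $[\bar g_i,{}_k\bar h_j]$, while the image of $[N,P]$ is $[\bar N,\bar P]$; by the finite case these two images coincide, i.e. $MU=[N,P]U$ for every open normal $U$. Since $M$ and $[N,P]$ are both closed, intersecting over all such $U$ forces $M=[N,P]$, as required. The step I expect to be the main obstacle is exactly the appeal to Lemma \ref{clo}, which requires the acting element to have finite $p$-power order: in the pro-$p$ group $P$ a generator $h_j$ may have infinite order, so Lemma \ref{clo} cannot be applied directly to $G$ and one is forced into the finite-quotient detour. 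The accompanying care is to check that forming $[N,P]$, taking normal closures, and passing to $G/U$ all commute, so that the equalities proved in the finite quotients genuinely assemble into the desired identity in $G$.
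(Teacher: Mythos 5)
Your proof is correct, and its core is the same as the paper's: both arguments reduce the lemma to the observation that the normal closure of the commutators $[g_i,h_j]$ coincides with that of the $[g_i,{}_kh_j]$ (via Lemma \ref{clo}), and then note that factoring out the former makes $G$ the direct product $\langle g_1,\dots,g_s\rangle\times\langle h_1,\dots,h_t\rangle$, so that $[O_{p'}(G),P]$ dies in the quotient. The one genuine difference is your finite-quotient detour, and it is a sensible addition rather than an inefficiency: the paper applies Lemma \ref{clo} directly to $O_{p'}(G)$ with $a=h_j$, but, as you observe, that lemma is stated for an automorphism of \emph{finite} $p$-power order, and a generator $h_j$ of the pro-$p$ group $P$ need not have finite order. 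Your route --- verify the equality of normal closures in every finite quotient $G/U$, where the image of $h_j$ does have finite $p$-power order, and then recover the profinite statement by intersecting over all open normal $U$ using that both subgroups are closed --- supplies exactly the inverse-limit argument that the paper leaves implicit. (Alternatively one can check that the proof of Lemma \ref{clo} goes through verbatim when $a$ is a pro-$p$ element acting on a pro-$p'$ group, since a conjugate of $a^{-1}$ is still a pro-$p$ element and so cannot equal a nontrivial $p'$-element times $a^{-1}$; but your reduction avoids having to reprove the lemma.) The bookkeeping you flag --- that $[O_{p'}(G),P]$ is closed and normal, and that forming normal closures and the subgroup $[N,P]$ commutes with passage to finite quotients --- all checks out.
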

\begin{proof} Let $D$ be the normal closure of all commutators $[g_i,{}_kh_j]$, where $1\leq i\leq s$ and $1\leq j\leq t$. By Lemma \ref{clo} this is the same as the normal closure of all commutators $[g_i,h_j]$. It is clear that $D\leq [O_{p'}(G),P]$ and so we can pass to the quotient $G/D$ and assume that $D=1$. In this case we have $G=\langle g_1,\dots,g_s\rangle\times\langle h_1,\dots,h_t\rangle$ and so $[O_{p'}(G),P]\leq D$. The lemma follows. 
\end{proof}

\begin{lemma}\label{ratio} Let $G$ be a profinite group and $k,n$ positive integers with $(n,p)=1$ for each $p\in\pi(G)$. Let $X$ be the set of all $\gamma_k$-values in $G$ and $Y$ the set of $n$-th powers of $\gamma_k$-values. Then $X=Y$.
\end{lemma}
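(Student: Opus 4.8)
The plan is to reduce the statement to finite groups and there exploit the distribution of $\gamma_k$-values via ordinary character theory. First I would record that $X$ and $Y$ are both closed: $X$ is the image of the continuous map $G^{k}\to G$, $(x_1,\dots,x_k)\mapsto\gamma_k(x_1,\dots,x_k)$, hence compact, while $Y$ is the image of the compact set $X$ under $g\mapsto g^{n}$. Since every finite continuous quotient $\bar G=G/N$ satisfies $\pi(\bar G)\subseteq\pi(G)$, we have $\gcd(n,|\bar G|)=1$, and the images of $X$ and $Y$ in $\bar G$ are exactly the set $\bar X$ of $\gamma_k$-values of $\bar G$ and the set $\bar Y$ of their $n$-th powers. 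As $X=\bigcap_N XN$ and $Y=\bigcap_N YN$ by closedness, it suffices to prove $\bar X=\bar Y$ in every such finite quotient. Thus I may assume $G=H$ is finite with $\gcd(n,|H|)=1$.

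Over a finite $H$ the map $\phi_n\colon g\mapsto g^{n}$ is a bijection (coprimeness makes it injective, hence surjective), so either of the inclusions $X\subseteq Y$, $Y\subseteq X$ would already give equality. The content is therefore the single assertion that \emph{the $n$-th power of a $\gamma_k$-value is again a $\gamma_k$-value}. In the transparent case in which the normal closure $M=\langle w^{\langle g\rangle}\rangle$ of a $\gamma_{k-1}$-value $w$ under the last entry $g$ is abelian, one has $[w,g]^{n}=[w^{n},g]$, and since $w^{n}$ is a $\gamma_{k-1}$-value by induction on $k$, the element $[w,g]^{n}$ is a $\gamma_k$-value. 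The difficulty is that $M$ need not be abelian and the identity $[w,g]^{n}=[w^{n},g]$ fails modulo $[M,M]$; this non-abelian step is the main obstacle, and I expect the coprimeness hypothesis to be what rescues it.

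To overcome it I would pass to the counting function $f_k(h)=\#\{(x_1,\dots,x_k)\in H^{k}\colon \gamma_k(x_1,\dots,x_k)=h\}$, so that $h$ is a $\gamma_k$-value precisely when $f_k(h)>0$. The goal becomes to show that $f_k$ is a \emph{rational class function}, i.e. $f_k(h)=f_k(h^{n})$ whenever $\gcd(n,|H|)=1$; this delivers both inclusions at once. Writing $\Theta_\chi=\sum_{x_1,\dots,x_k}\chi(\gamma_k(x_1,\dots,x_k))$ for $\chi\in\mathrm{Irr}(H)$, the Fourier coefficients are $\langle f_k,\chi\rangle=\tfrac1{|H|}\overline{\Theta_\chi}$, and a Galois automorphism $\sigma\in\mathrm{Gal}(\mathbb{Q}(\zeta_{|H|})/\mathbb{Q})$ with $\zeta\mapsto\zeta^{n}$ satisfies $\Theta_{\chi^{\sigma}}=\sigma(\Theta_\chi)$. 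Hence rationality of $f_k$ is equivalent to $\Theta_\chi\in\mathbb{Q}$ for every irreducible $\chi$.

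Finally I would establish $\Theta_\chi\in\mathbb{Q}$ by induction on $k$ using the class-sum identity $\sum_{v\in H}\chi(av^{-1}bv)=\tfrac{|H|}{\chi(1)}\chi(a)\chi(b)$. Summing over the last variable converts $\Theta_\chi$ for $\gamma_k$ into $\tfrac{|H|}{\chi(1)}\sum_{u}f_{k-1}(u)\,|\chi(u)|^{2}$; the prefactor is rational, and the sum is fixed by every $\sigma$ because $\sigma(|\chi(u)|^{2})=|\chi(u^{n})|^{2}$ and $f_{k-1}(u^{n})=f_{k-1}(u)$ by the inductive hypothesis (the case $k=1$ being trivial and $k=2$ giving $\Theta_\chi=|H|^{2}/\chi(1)$ at once). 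Thus $f_k$ is rational, $\bar X=\bar Y$ in each finite quotient, and the profinite equality $X=Y$ follows. The genuinely non-abelian rationality of the value distribution is where the argument concentrates, and coprimeness enters there twice: through the bijectivity of $\phi_n$ and through the Galois action $\zeta\mapsto\zeta^{n}$.
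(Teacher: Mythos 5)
Your proof is correct, and its skeleton coincides with the paper's: reduce to finite quotients by an inverse limit argument, observe that $g\mapsto g^n$ is a bijection of the finite group so that $|X|=|Y|$ and a single inclusion suffices, and then establish $Y\subseteq X$. The difference is in how that inclusion is obtained. The paper simply cites the Guralnick--Shumyatsky result that the words $\gamma_k$ are \emph{rational} (the $n$-th power of a $\gamma_k$-value is again a $\gamma_k$-value when $(n,|H|)=1$), whereas you prove this from scratch by showing that the counting function $f_k$ is a rational class function. Your character-theoretic induction is sound: the identity $\sum_{v}\chi(av^{-1}bv)=\frac{|H|}{\chi(1)}\chi(a)\chi(b)$ does give $\Theta_\chi=\frac{|H|}{\chi(1)}\sum_u f_{k-1}(u)\,|\chi(u)|^2$, the Galois action $\sigma(|\chi(u)|^2)=|\chi(u^n)|^2$ combined with $f_{k-1}(u)=f_{k-1}(u^n)$ and the reindexing $u\mapsto u^n$ fixes $\Theta_\chi$, and the base cases are as you say. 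This is essentially the argument behind the cited reference, so what your write-up buys is self-containedness (at the cost of length); it proves the stronger statement that the fibre sizes, not just the image, are invariant under coprime powering. The only cosmetic point is that your motivational paragraph about the abelian case and the failure of $[w,g]^n=[w^n,g]$ is not needed for the final argument and could be dropped.
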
 
\begin{proof} A simple inverse limit argument shows that without loss of generality we can assume that $G$ is finite. Since $(|G|,n)=1$, it follows that $|Y|=|X|$. By \cite{gushu} $Y \subseteq X$. Hence $X=Y$.
\end{proof}
\begin{lemma}\label{finite} Let $G$ be a profinite group in which every ${\gamma_k}$-value has finite $p$-power order. If $G$ is generated by $p$-elements, then $G$ is a pro-$p$ group. If $G$ is generated by finitely many $p$-elements, then $G$ is a finite $p$-group.
\end{lemma}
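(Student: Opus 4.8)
The plan is to prove both assertions by reducing to finite groups and inducting on the order. For the first assertion, recall that a profinite group is pro-$p$ if and only if each of its finite continuous quotients is a $p$-group. If $\bar G$ is such a quotient, then $\bar G$ is generated by $p$-elements (the images of the given generators), and every $\gamma_k$-value of $\bar G$, being the image of a $\gamma_k$-value of $G$, has $p$-power order. Hence it suffices to prove the finite statement: if $G$ is a finite group generated by $p$-elements in which every $\gamma_k$-value is a $p$-element, then $G$ is a $p$-group. The case $k=1$ is immediate (then every element is a $p$-element), so I assume $k\ge 2$.

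I would establish the finite statement by induction on $|G|$. Let $N$ be a minimal normal subgroup; the quotient $G/N$ inherits the hypotheses, so by induction $G/N$ is a $p$-group. Write $N=S_1\times\cdots\times S_t$ as a product of isomorphic simple groups. If $N$ is an elementary abelian $p$-group, then $G$ is an extension of a $p$-group by a $p$-group, hence a $p$-group, and we are done. If $N$ is an elementary abelian $r$-group with $r\ne p$, then $N$ is a normal $p'$-subgroup with $G/N$ a $p$-group, so by Schur--Zassenhaus $G=N\rtimes P$ with $P$ a Sylow $p$-subgroup; since $G$ is generated by $p$-elements, all of which lie in conjugates of $P$, one gets $G=\langle P^G\rangle=P[N,P]$, whence $[N,P]=N$. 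Coprime action then gives $[N,{}_{k-1}P]=[N,P]=N$, while $[N,{}_{k-1}P]$ is generated by the commutators $[n,u_1,\dots,u_{k-1}]$ with $n\in N$ and $u_i\in P$, which are $\gamma_k$-values lying in the $p'$-group $N$ and are therefore trivial. This forces $N=1$, a contradiction. Finally, if $N$ is non-abelian, then some $S_i$ is a non-abelian finite simple group, so $q\mid |S_i|$ for some prime $q\ne p$; using that the word $\gamma_k$ is surjective on finite non-abelian simple groups (Ore's conjecture when $k=2$), I choose an element of order $q$ in $S_1$ and pad the remaining coordinates with the identity to exhibit a $\gamma_k$-value of $N$, and hence of $G$, of order $q$, contradicting the hypothesis. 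Thus only the first case survives and $G$ is a $p$-group.

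For the second assertion, $G$ is finitely generated, so by the first part $G$ is pro-$p$. The quotient $G/\gamma_k(G)$ is nilpotent and generated by finitely many elements of finite order; since in a nilpotent group the torsion elements form a subgroup, $G/\gamma_k(G)$ is torsion, and a finitely generated torsion nilpotent pro-$p$ group is finite. Consequently $\gamma_k(G)$ is open in $G$ and therefore finitely generated. On the other hand $G$ is a finitely generated profinite group in which every $\gamma_k$-value has finite $p$-power order, so by the results of \cite{shu} the subgroup $\gamma_k(G)$ is locally finite. A finitely generated locally finite profinite group is finite, so $\gamma_k(G)$ is finite and hence so is $G$; being pro-$p$, $G$ is a finite $p$-group.

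The main obstacle is the non-abelian case of the induction: excluding non-abelian composition factors is precisely where the hypothesis that every $\gamma_k$-value is a $p$-element has to be combined with the non-elementary fact that $\gamma_k$ is surjective on finite non-abelian simple groups. The elementary abelian $p'$-case is the other delicate point, and there the essential use of the hypothesis that $G$ is generated by $p$-elements is what yields $[N,P]=N$ and closes the argument through coprime commutation.
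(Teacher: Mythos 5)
Your proof is correct, but for the first assertion it takes a genuinely different route from the paper. The paper disposes of the finite case in one line by quoting Lemma~3.3 of \cite{shu2}, which asserts that $\gamma_k(Q)\leq O_p(Q)$ for any finite group $Q$ whose $\gamma_k$-values are all $p$-elements; combined with generation by $p$-elements this immediately forces every finite quotient to be a $p$-group. You instead unpack that citation into a self-contained minimal-counterexample argument: the elementary abelian $p'$-case is handled by Schur--Zassenhaus, the identity $G=\langle P^G\rangle=P[N,P]$, coprime action, and the observation that for abelian $N$ the subgroup $[N,{}_{k-1}P]$ is generated by left-normed commutators (this is the analogue, in your setting, of what Lemma~\ref{clo} does in the paper, and it holds because $n\mapsto[n,u]$ is a homomorphism on abelian $N$); the non-abelian case is excluded by invoking the surjectivity of the word $\gamma_k$ on finite non-abelian simple groups. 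What the paper's citation buys is brevity; what your argument buys is transparency about exactly where each hypothesis is used --- at the price of importing Ore's conjecture (hence the classification of finite simple groups), a far heavier tool than anything the paper itself relies on, where a softer fact about simple groups would likely suffice. Your proof of the second assertion coincides with the paper's: nilpotent quotients of $G$ are finite because a finitely generated nilpotent group generated by torsion elements is finite (note that your phrase ``$G/\gamma_k(G)$ is torsion'' is best justified by observing that the dense abstract subgroup generated by the torsion generators is finite, hence closed and equal to the whole quotient), so $\gamma_k(G)$ is open and finitely generated, and local finiteness of $\gamma_k(G)$ from \cite{shu} finishes the proof.
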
 
\begin{proof} Let $Q$ be any finite quotient of $G$. By Lemma 3.3 of \cite{shu2} we have ${\gamma_k}(Q)\leq O_p(Q)$. Since $Q$ is generated by $p$-elements, it follows that $Q$ is a finite $p$-group and hence $G$ is a pro-$p$ group. In particular, Theorem 1.2 of \cite{shu} tells us that $\gamma_k(G)$ is locally finite. Suppose now that $G$ is generated by finitely many $p$-elements. Then any nilpotent quotient of $G$ is finite and therefore $\gamma_k(G)$ is open. It follows that $\gamma_k(G)$ is finitely generated. We already know that $\gamma_k(G)$ is locally finite and so now we conclude that $\gamma_k(G)$ is finite. The lemma follows.
\end{proof}

The following result is a straightforward corollary of Theorem 1.4 in \cite{NS}.

\begin{theorem} \label{NS}
Let $k$ be a positive integer and $G$ a finitely generated profinite group. Then every element in $\gamma_k(G)$ is a product of finitely many $\gamma_k$-values. 
\end{theorem}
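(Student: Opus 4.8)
The plan is to reduce the statement to a uniform bound on the width of $\gamma_k$ in finite groups, which is precisely the content imported from Theorem~1.4 of \cite{NS}, and then to transfer this bound to the profinite group by a compactness argument. Assume $G$ is topologically generated by $d$ elements. The fact I would invoke from \cite{NS} is the existence of an integer $N=N(d,k)$, depending only on $d$ and $k$, such that in every $d$-generated finite group $Q$ each element of $\gamma_k(Q)$ is a product of at most $N$ values of $\gamma_k^{\pm1}$. Since every finite continuous quotient of $G$ is again generated by $d$ elements, this single bound $N$ applies simultaneously to all finite quotients of $G$.

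Before using it, I would record that the set of $\gamma_k$-values is symmetric. Writing $u=[x_1,\dots,x_{k-1}]$, a direct computation gives the identity $[u,x_k]^{-1}=[u,x_k^{-1}]^{x_k}$, and since $[x_1,\dots,x_k]^g=[x_1^g,\dots,x_k^g]$, the right-hand side is again a $\gamma_k$-value (namely $[x_1^{x_k},\dots,x_{k-1}^{x_k},x_k^{-1}]$). Hence the inverse of a $\gamma_k$-value is itself a $\gamma_k$-value, and so throughout one may replace ``products of values of $\gamma_k^{\pm1}$'' by ``products of $\gamma_k$-values''. This removes the only discrepancy between the form of the bound in \cite{NS} and the statement to be proved.

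Next I would introduce the set $P_N$ of all products of at most $N$ $\gamma_k$-values of $G$. This is the image of the compact space $G^{kN}$ under the continuous map sending a tuple of elements to the corresponding product of commutators, so $P_N$ is compact, hence closed in $G$, and evidently $P_N\subseteq\gamma_k(G)$. For every open normal subgroup $U$ of $G$, the image of $P_N$ in the finite quotient $G/U$ is exactly the set of products of at most $N$ $\gamma_k$-values of $G/U$, which by the uniform bound above coincides with $\gamma_k(G/U)=\gamma_k(G)U/U$. Therefore $P_N U=\gamma_k(G)U$ for every such $U$.

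Finally I would intersect over all open normal $U$. Because $P_N$ and $\gamma_k(G)$ are both closed, one has $\bigcap_U P_N U=P_N$ and $\bigcap_U \gamma_k(G)U=\gamma_k(G)$, so the equalities $P_N U=\gamma_k(G)U$ force $P_N=\gamma_k(G)$. In particular every element of $\gamma_k(G)$ is a product of at most $N$ (in any case finitely many) $\gamma_k$-values, as claimed. The genuine mathematical input is the uniform width bound of \cite{NS}; the rest is the symmetry observation together with an inverse-limit argument, which is why the result is a straightforward corollary. The one point deserving care is the closedness of $P_N$, since it is exactly what makes the intersection step legitimate, and this in turn is what makes finite generation of $G$ essential, through the existence of the finite bound $N$.
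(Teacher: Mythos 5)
Your proposal is correct and is precisely the deduction the paper has in mind when it calls the result ``a straightforward corollary of Theorem 1.4 in \cite{NS}'': the uniform width bound for $\gamma_k$ in $d$-generator finite quotients, the observation that $\gamma_k$-values are closed under inversion, compactness of the set of bounded products, and the intersection over open normal subgroups. The paper itself gives no written proof, so there is nothing further to compare; your argument supplies the omitted details correctly.
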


\section{Associated Lie algebras}
In the present section we will describe the Lie theoretical tools that are employed in the proofs of our results. 

Let $L$ be a Lie algebra over a field ${\mathfrak k}$.
We use the left normed notation: thus if
$l_1,l_2,\dots,l_n$ are elements of $L$, then
$$[l_1,l_2,\dots,l_n]=[\dots[[l_1,l_2],l_3],\dots,l_n].$$
An element $a\in L$ is called ad-nilpotent if there
exists a positive integer $n$ such that
$[x,{}_na]=0$ for all $x\in L$. If $n$ is the least 
integer with the above property then we say that $a$
is ad-nilpotent of index $n$. Let $X\subseteq L$ be
any subset of $L$. By a commutator in elements of $X$
we mean any element of $L$ that could be obtained from
elements of $X$ by means of repeated operation of
commutation with an arbitrary system of brackets
including the elements of $X$. Denote by $F$ the free
Lie algebra over ${\mathfrak k}$ on countably many free
generators $x_1,x_2,\dots$. Let $f=f(x_1,x_2,
\dots,x_n)$ be a non-zero element of $F$. The algebra
$L$ is said to satisfy the identity $f=0$ if
$f(l_1,l_2,\dots,l_n)=0$ for any $l_1,l_2,\dots,l_n
\in L$. In this case we say that $L$ is PI. We are now
in a position to quote a theorem of Zelmanov 
\cite[III(0.4)]{zelm} which has numerous important
applications to group theory.
\begin {theorem}\label{1} 
Let $L$ be a Lie algebra generated by $a_1,a_2,\dots,a_m$. Assume that $L$ is PI and that each commutator in the generators is ad-nilpotent. Then $L$ is nilpotent.
\end{theorem}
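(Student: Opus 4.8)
This is one of Zelmanov's deep nilpotency theorems, and the plan is to reduce it to the Kostrikin--Zelmanov theory of \emph{sandwich} algebras, which lies at the heart of the solution of the Restricted Burnside Problem. Recall that an element $c$ of a Lie algebra is a sandwich if $[x,c,c]=0$ and $[x,c,y,c]=0$ for all $x,y$, i.e. $(\operatorname{ad} c)^2=0$ and $(\operatorname{ad} c)(\operatorname{ad} y)(\operatorname{ad} c)=0$. The fundamental fact I would invoke is that a Lie algebra generated by finitely many sandwiches is nilpotent, of class bounded in terms of the number of generators. The entire difficulty is then shifted to producing enough sandwiches inside $L$ out of the two hypotheses: that $L$ is PI, and that every commutator in the generators $a_1,\dots,a_m$ is ad-nilpotent.

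First I would set up a universal object. Passing to the relatively free algebra of the variety generated by $L$ and extending scalars (reducing to an algebraically closed or prime field), it suffices to establish nilpotency of this free object, after which a compactness/specialization argument returns the conclusion to $L$ itself. In this universal algebra I would linearize each ad-nilpotency relation $[x,{}_{n}c]=0$, where $c$ runs over commutators in the generators, to extract multilinear Engel-type identities; note that the indices $n=n(c)$ are not assumed bounded, so one works with one commutator at a time rather than with a single global Engel identity.

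The core of the argument, and the step I expect to be the main obstacle, is the manufacture of sandwiches. Here I would follow Zelmanov's superalgebra method: adjoin an ad-nilpotent odd element to form a $\mathbb{Z}/2$-graded envelope and appeal to the structure theory of PI Lie superalgebras, the PI hypothesis ensuring that the associated superalgebra is not too large. The interplay of ad-nilpotency with the polynomial identity then forces, whenever $L$ is not nilpotent, a nonzero sandwich to appear in a suitable section. Complementing this, I would use the structure theory of \emph{non-degenerate} PI Lie algebras (those with no nonzero sandwiches): such an algebra admits a classical-type description, and the strong requirement that \emph{every} commutator in the generators be ad-nilpotent excludes the classical simple pieces --- for instance $\mathfrak{sl}_2$ is ruled out precisely because the commutator $[e,f]=h$ is not ad-nilpotent --- forcing the non-degenerate quotient to be nilpotent.

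Finally I would organize the proof around the ideal $S$ generated by all sandwiches of $L$. The quotient $L/S$ is non-degenerate and hence nilpotent by the structure-theoretic step, so everything reduces to controlling $S$; since $L$ is finitely generated, each relevant subalgebra of $S$ is generated by finitely many sandwiches and is therefore nilpotent by Kostrikin--Zelmanov. Assembling the nilpotency of $S$ with that of $L/S$, and tracking the bounds produced at each stage, yields the nilpotency of $L$. The genuinely hard input throughout is the sandwich-production mechanism together with the accompanying structure theory for PI Lie (super)algebras; the remaining bookkeeping is comparatively routine.
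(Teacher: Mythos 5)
This statement is not proved in the paper at all: it is quoted verbatim as Zelmanov's theorem \cite[III(0.4)]{zelm}, and the authors use it as a black box. So there is no in-paper argument to compare yours against; your proposal has to be judged as an attempted proof of one of the deepest results underlying the Restricted Burnside Problem.

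As a description of the architecture of Zelmanov's actual proof, your sketch is essentially faithful: the Kostrikin--Zelmanov theorem that a Lie algebra generated by finitely many sandwiches is nilpotent, the superalgebra machinery for manufacturing sandwiches from the PI hypothesis together with ad-nilpotency, and the dichotomy with non-degenerate (sandwich-free) PI algebras are indeed the correct ingredients. But as a proof it has two genuine gaps. First, the entire content of the theorem is concentrated in the step you explicitly defer --- producing a nonzero sandwich in a suitable section whenever $L$ is not nilpotent, and showing that a finitely generated non-degenerate piece must vanish or be nilpotent under the hypothesis that all commutators in the generators are ad-nilpotent. Naming the superalgebra method is not the same as executing it; nothing in your write-up would let a reader verify that the PI hypothesis and the (unbounded, commutator-dependent) ad-nilpotency indices actually combine to yield a sandwich. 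Second, your final assembly step is false as stated: an extension of a nilpotent Lie algebra by a nilpotent Lie algebra need not be nilpotent (the two-dimensional non-abelian algebra is abelian-by-abelian), so ``nilpotency of $S$ plus nilpotency of $L/S$ yields nilpotency of $L$'' requires an additional argument, e.g.\ that a finitely generated solvable Lie algebra in which every commutator in the generators is ad-nilpotent is nilpotent. Relatedly, the claim that the relevant subalgebras of the sandwich ideal $S$ are generated by \emph{finitely many} sandwiches is not automatic and needs justification. Given the depth of the result, citing it (as the paper does) is the right move; a self-contained proof along your outline would require reproducing a substantial part of \cite{zelm}.
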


Let $G$ be a group and $p$ a prime. We denote by $D_i=D_i(G)$ 
the $i$-th dimension subgroup of $G$ in characteristic
$p$. These subgroups form a central series of $G$
known as the Zassenhaus-Jennings-Lazard series. Set
$L(G)=\bigoplus D_i/D_{i+1}$. 
Then $L(G)$ can naturally be viewed as a Lie algebra 
over the field ${\mathbb F}_p$ with $p$ elements.
The subalgebra of $L$ generated by $D_1/D_2$ will be 
denoted by $L_p(G)$. 
The following result is due to Lazard \cite{l2}.
\begin{theorem}\label{3} 
Let $G$ be a finitely generated pro-$p$
group. If $L_p(G)$ is nilpotent, then $G$ is
$p$-adic analytic.\end{theorem}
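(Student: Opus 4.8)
The plan is to recognize that this is the sufficiency direction of Lazard's characterization, and to prove it by reducing the Lie-theoretic hypothesis to finite-dimensionality, then converting finite-dimensionality into finite rank, and finally invoking the equivalence between finite rank and $p$-adic analyticity. First I would use finite generation: since $G$ can be generated by finitely many elements, $D_1/D_2$ is a finite-dimensional vector space over $\mathbb{F}_p$, and by construction $L_p(G)$ is the Lie subalgebra of $L(G)$ generated by $D_1/D_2$. Hence $L_p(G)$ is a finitely generated Lie algebra over $\mathbb{F}_p$, and a finitely generated \emph{nilpotent} Lie algebra over a field is automatically finite-dimensional. So the hypothesis yields $\dim_{\mathbb{F}_p} L_p(G) = d < \infty$, and it is this finite-dimensionality, rather than nilpotency itself, that does the work. (Nilpotency is simply the form in which the hypothesis is delivered by Zelmanov's Theorem \ref{1} in the applications.)

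Next I would show that finite-dimensionality of $L_p(G)$ forces $G$ to have finite rank. The point is that $L(G) = \bigoplus_i D_i/D_{i+1}$ is a \emph{restricted} Lie algebra generated, as a restricted algebra, by $D_1/D_2$; thus it is the ordinary Lie subalgebra generated by the iterated $p$-power images $u^{[p^k]}$ of a basis of $D_1/D_2$. Using the restricted-algebra identity $\operatorname{ad}(y^{[p]}) = (\operatorname{ad} y)^{p}$, every bracket that involves such a restriction image collapses to an iterated bracket lying in the finite-dimensional $L_p(G)$, so the only genuinely new elements of $L(G)$ are the restriction images themselves, which occur in the $p$-power-scaled degrees $p^k, p^k\cdot(\deg u), \dots$ . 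Counting these through degree $i$ gives a logarithmic bound $\sum_{j\le i} \dim(D_j/D_{j+1}) = O(\log i)$, whence $\log_p|G:D_i| = O(\log i)$ and $|G:D_i|$ grows only polynomially in $i$. Via Jennings' identification of $\operatorname{gr}\mathbb{F}_p[[G]]$ with the restricted enveloping algebra $u(L(G))$, this polynomial subgroup growth is exactly the statement that $G$ has finite rank, i.e. a uniform bound on the number of generators of its closed subgroups.

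The hard part will be the final transition: passing from this graded, Lie-algebraic finiteness back to a genuine $\mathbb{Q}_p$-analytic manifold structure on $G$. This is the substance of Lazard's theory and is not formal. Concretely, from finite rank one produces an open uniformly powerful subgroup, equivalently a saturated $p$-valuation on $G$ whose associated graded Lie algebra reproduces $L(G)$ in the relevant degrees, and then the mutually inverse exponential and logarithm maps equip this subgroup with a $p$-adic analytic structure that extends to $G$. I expect the Lie-algebra reduction and the growth estimate to be comparatively routine, while the reconstruction of the analytic structure from the valuation via the exp/log correspondence is where the real depth lies. Once finite rank is in hand, the fundamental equivalence "finite rank $\iff$ $p$-adic analytic" for pro-$p$ groups closes the argument and yields the theorem.
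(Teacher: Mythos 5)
The paper does not prove this statement at all: Theorem~\ref{3} is quoted verbatim from Lazard \cite{l2} as a known deep result, so there is no internal proof to compare against. Your proposal is therefore an attempt to reprove Lazard's theorem from scratch, and while its overall architecture matches the standard modern treatment (as in Lazard's original work and its reworking by Lubotzky and Mann), it is an outline rather than a proof. The one step you carry out completely is the first one: $D_1/D_2$ is finite-dimensional because $G$ is finitely generated, and a finitely generated nilpotent Lie algebra is spanned by the finitely many left-normed commutators of weight at most the nilpotency class, hence $\dim L_p(G)<\infty$. That part is correct and is exactly how the hypothesis is used in applications.

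The remaining steps each rest on a result you assert but do not establish, and these are precisely where the depth lies. First, the claim that $L(G)$ is generated as a \emph{restricted} Lie algebra by $D_1/D_2$ is a theorem about the Zassenhaus--Jennings--Lazard series (due to Lazard, cf.\ also Quillen), not a formality. Second, your dimension count (cumulative dimension $O(\log i)$, hence $|G:D_i|$ polynomial in $i$) is fine granted the first claim, but the assertion that this ``is exactly the statement that $G$ has finite rank'' is wrong as stated: finite rank is a uniform bound on generators of \emph{all} closed subgroups, and extracting it from the growth of the single filtration $(D_i)$ requires a genuine argument (typically one shows the filtration eventually satisfies $D_{ip}=D_i^p$ and produces a powerful open subgroup). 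Third, the equivalence ``finite rank $\iff$ $p$-adic analytic,'' which you invoke to close the argument, is itself the main theorem of Lazard's theory; you acknowledge this, but it means your proof reduces one deep theorem to another of comparable depth. In the context of this paper the correct move is simply to cite \cite{l2}; if you want a self-contained proof, each of the three steps above needs to be filled in, and together they amount to a substantial portion of the theory of analytic pro-$p$ groups.
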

Let $x\in G$, and let $i=i(x)$ be the largest integer such
that $x\in D_i$. We denote by ${\tilde x}$ the 
element $xD_{i+1}\in L(G)$. We now cite two results providing
sufficient conditions for ${\tilde x}$ to be ad-nilpotent.
The following lemma is immediate
from the proof of Lemma in \cite[Section 3]{wize}.
\begin{lemma}\label{4} Let $x$ be an Engel element of a
profinite group $G$. Then ${\tilde x}$ is ad-nilpotent.
\end{lemma}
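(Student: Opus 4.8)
The plan is to deduce ad-nilpotency of $\tilde x$ from the (pointwise) Engel condition in two stages: first use a Baire category argument to manufacture a single integer $N$ for which $[\,\cdot\,,{}_N x]=1$ holds identically on an open subgroup, and then transport this relation into $L(G)$ via the usual correspondence between group commutators and Lie brackets. Since the dimension subgroups $D_i$ in characteristic $p$, and hence $L(G)$ itself, factor through the maximal pro-$p$ quotient of $G$, and the image of $x$ there is again Engel, I may assume $G$ is pro-$p$ with the $D_i$ open of trivial intersection, so that the $D_i$ form a base of neighbourhoods of $1$.

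First I would argue as in Lemma \ref{pi-n}. Writing $\delta=i(x)$ and $S_n=\{y\in G:[y,{}_n x]=1\}$, each $S_n$ is closed and $\bigcup_n S_n=G$ because $x$ is Engel; as $G$ is compact it is a Baire space, so some $S_N$ has non-empty interior. Thus there are $c\in G$ and an open subgroup $U$ with $cU\subseteq S_N$, whence $[cu,{}_N x]=1$ for all $u\in U$ and, taking $u=1$, $[c,{}_N x]=1$. Shrinking $U$ I may assume $U\supseteq D_m$ for some $m$, so that every homogeneous element of $L(G)$ of degree $\ge m$ is represented by an element of $U$.

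The core of the proof, and the step I expect to be the main obstacle, is the transfer of $[cu,{}_N x]=1$ into $L(G)$. Fix $u\in U$, say $u\in D_s\setminus D_{s+1}$, and write $c\in D_r\setminus D_{r+1}$ with $r\ge1$. Expanding $[cu,{}_N x]=1$ as a product of basic commutators in $c,u,x$, two facts drive the argument. First, a Magnus-embedding computation shows that every basic commutator occurring involves at least $N$ occurrences of $x$; hence one of multidegree $(\alpha,\beta,\gamma)$ in $(c,u,x)$, with $\gamma\ge N$, lies in $D_{\alpha r+\beta s+\gamma\delta}$. Consequently every genuinely mixed commutator ($\alpha\ge1$, $\beta\ge1$) and every pure-$u$ commutator with $\beta\ge2$ or with $\gamma>N$ lies in $D_{s+N\delta+1}$, while the pure-$u$ commutators with $\beta=1,\gamma=N$ have leading terms summing to $[\tilde u,{}_N\tilde x]$. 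Second, the pure-$c$ commutators are eliminated by specialising $u=1$: their exponents in the collected form of $[cu,{}_N x]$ equal their exponents in $[c,{}_N x]=1$ and are therefore zero. Reducing $[cu,{}_N x]=1$ modulo $D_{s+N\delta+1}$ then leaves exactly $[\tilde u,{}_N\tilde x]=0$. As $u$ runs over $U\supseteq D_m$, we obtain $[\tilde y,{}_N\tilde x]=0$ for every homogeneous $\tilde y\in L(G)$ of degree $\ge m$.

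It remains to absorb the finitely many low degrees. The subspace $\bigoplus_{j<m}D_j/D_{j+1}$ is finite-dimensional over $\mathbb F_p$; picking homogeneous representatives $\tilde y_1,\dots,\tilde y_k$ of a basis and invoking the Engel property of $x$, there are integers $n_i$ with $[y_i,{}_{n_i}x]=1$, hence $[\tilde y_i,{}_{n_i}\tilde x]=0$. With $M=\max\{N,n_1,\dots,n_k\}$, linearity gives $(\operatorname{ad}\tilde x)^M=0$ on every homogeneous component of degree $<m$ as well. Since $\operatorname{ad}\tilde x$ raises degree by $\delta$, it annihilates every homogeneous element, and therefore all of $L(G)$, after $M$ steps; thus $\tilde x$ is ad-nilpotent of index at most $M$. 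The delicate point throughout is the commutator bookkeeping in the transfer step---establishing the ``at least $N$ occurrences of $x$'' property and the vanishing of the pure-$c$ contribution---which is precisely what allows the single relation $[cu,{}_N x]=1$ to yield the clean Lie identity $[\tilde u,{}_N\tilde x]=0$.
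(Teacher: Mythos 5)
The paper does not actually prove this lemma; it cites the Lemma in Section 3 of Wilson--Zelmanov \cite{wize}, and what you are attempting is a reconstruction of that argument. Your architecture (Baire category to get a coset $cU$ with $[cu,{}_N x]=1$ for all $u\in U$, a collection-process transfer to obtain $[\tilde u,{}_N\tilde x]=0$, and a separate treatment of the part of $L(G)$ not reached by $U$) is the right one. However, two steps do not hold as written. First, the elimination of the pure-$c$ commutators is wrong as stated: the exponents in the collected form of $[cu,{}_N x]$ are free-group data and are certainly not zero (already for $N=1$ one has $[cu,x]=[c,x]\,[[c,x],u]\,[u,x]$, so $[c,x]$ occurs with exponent $1$); the fact that $[c,{}_N x]=1$ holds \emph{in $G$} does not make those exponents vanish. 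What is true is that the pure-$(c,x)$ factors multiply, when evaluated in $G$, to $[c,{}_N x]=1$. But these factors are interleaved with the mixed and pure-$(u,x)$ factors in the weight-ordered product, so you must additionally justify un-interleaving the product modulo $D_{s+N\delta+1}$; this does work, because the commutators created by the transpositions involve both $c$ and $u$ and at least $N$ occurrences of $x$ and hence lie deep enough, but it is precisely the point at which the pure-$c$ commutators (which individually need \emph{not} lie in $D_{s+N\delta+1}$, since $\alpha r+\gamma\delta$ can be far smaller than $s+N\delta$ when $s$ is large) are disposed of, and it cannot be skipped.

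Second, you assume the $D_i$ are open with trivial intersection, so that they form a neighbourhood base. This holds for finitely generated pro-$p$ groups but fails in general (already $G/D_2$, the maximal elementary abelian $p$-quotient, can be infinite), while the lemma is stated for an arbitrary profinite group. Without it, both the step ``shrink $U$ so that $U\supseteq D_m$'' and the finite-dimensionality of $\bigoplus_{j<m}D_j/D_{j+1}$ collapse, and your low-degree patch gives no uniform bound. The standard repair --- essentially what Wilson and Zelmanov do --- is to replace $U$ by its normal core $H$ and work with $L(G,H)=\bigoplus_i (D_i\cap H)D_{i+1}/D_{i+1}$: the transfer step shows $(\mathrm{ad}\,\tilde x)^N$ vanishes on $L(G,H)$; this subspace is $\mathrm{ad}\,\tilde x$-invariant (as $H$ is normal) and has finite codimension $d\le\log_p[G:H]$; and since $\mathrm{ad}\,\tilde x$ strictly raises degree, every homogeneous element is carried into $L(G,H)$ after at most $d$ applications, whence $(\mathrm{ad}\,\tilde x)^{N+d}=0$. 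With these two repairs your proof goes through; as written, it covers only the finitely generated pro-$p$ case, and even there the pure-$c$ step needs the correction above.
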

\begin{lemma} \label{5}(Lazard, \cite[page 131]{la}) 
For any $x\in G$ we have
$(ad\,{\tilde x})^p=ad\,(\widetilde {x^p})$. In particular,
if $x^q=1$, then ${\tilde x}$ is ad-nilpotent of index
at most $q$.
\end{lemma}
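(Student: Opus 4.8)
The plan is to deduce the identity from the defining compatibility, in a restricted Lie algebra over a field of characteristic $p$, between the $p$-operation and the adjoint representation. Recall that the dimension subgroups admit the group-algebra description $D_i=\{g\in G:\,g-1\in I^i\}$, where $I$ is the augmentation ideal of the completed group algebra $\mathbb{F}_p[[G]]$, and that the associated graded algebra $A=\bigoplus_i I^i/I^{i+1}$ is an associative $\mathbb{F}_p$-algebra which, under the commutator bracket, contains $L(G)$ as a Lie subalgebra; for $x\in D_i\setminus D_{i+1}$ the element $\tilde x$ corresponds to the class of $x-1$ in $I^i/I^{i+1}$. First I would record the purely associative fact that, in any associative $\mathbb{F}_p$-algebra and for any element $a$, one has $(ad\,a)^p=ad\,(a^p)$. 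To see this, write $ad\,a=L_a-R_a$, where $L_a$ and $R_a$ are left and right multiplication by $a$; these operators commute, and from $L_aL_b=L_{ab}$ and $R_aR_b=R_{ba}$ we get $L_a^p=L_{a^p}$ and $R_a^p=R_{a^p}$, so that the identity $(u-v)^p=u^p-v^p$ for commuting operators in characteristic $p$ gives $(L_a-R_a)^p=L_a^p-R_a^p=ad\,(a^p)$.

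The heart of the argument is the evaluation of $a^p$ when $a$ is the class of $x-1$. Because the coefficient field has characteristic $p$, every intermediate binomial coefficient $\binom{p}{j}$ with $0<j<p$ vanishes, and hence in $\mathbb{F}_p[[G]]$ one has the exact identity $x^p-1=(x-1)^p$. Consequently $x-1\in I^i$ forces $x^p-1\in I^{pi}$, and on passing to $A$ the class of $x^p-1$ is precisely the $p$-th power of the class of $x-1$. In terms of the restricted structure on $L(G)$ this reads $\tilde x^{[p]}=\widetilde{x^p}$: the $p$-operation of the associated graded Lie algebra is induced by the $p$-th power map of $G$. Substituting $a=\tilde x$ into the associative identity and restricting the resulting operator equation to $L(G)$ yields $(ad\,\tilde x)^p=ad\,(\widetilde{x^p})$, which is the first assertion.

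For the second assertion I would iterate. Applying the first assertion with $x^p$ in place of $x$ and using $\widetilde{(x^p)^p}=\widetilde{x^{p^2}}$ gives $(ad\,\tilde x)^{p^2}=ad\,(\widetilde{x^{p^2}})$, and an immediate induction yields $(ad\,\tilde x)^{p^s}=ad\,(\widetilde{x^{p^s}})$ for every $s\ge 0$. If $x^q=1$ with $q=p^s$, then $x^{p^s}=1$, so $\widetilde{x^{p^s}}=0$ and $(ad\,\tilde x)^q=0$; thus $\tilde x$ is ad-nilpotent of index at most $q$. For a general $q$ one reduces to the $p$-part of the order of $x$, using that an element of order prime to $p$ lies in every dimension subgroup and therefore maps to $0$ in $L(G)$; in the pro-$p$ setting where the lemma is applied, $q$ may be taken to be a $p$-power and this point does not arise.

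The step I expect to require the most care is the justification that the $p$-operation of the restricted Lie algebra $L(G)$ really is the map induced by $x\mapsto x^p$; equivalently, that $L(G)$ is closed under the Frobenius of $A$ and that on the bottom graded piece this Frobenius agrees with the power map of the group. This structural fact is exactly what Lazard isolates, and once it is in hand both the operator identity and the ad-nilpotency estimate follow formally.
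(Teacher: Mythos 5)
The paper offers no proof of this lemma at all: it is quoted verbatim from Lazard with a citation, so there is nothing internal to compare your argument against. That said, your proof is the standard and correct derivation of Lazard's identity, and every step checks out. The reduction to the associative identity $(\mathrm{ad}\,a)^p=\mathrm{ad}\,(a^p)$ via $\mathrm{ad}\,a=L_a-R_a$ with $L_a,R_a$ commuting is sound (note $R_aR_b=R_{ba}$, so indeed $R_a^p=R_{a^p}$), the identity $(x-1)^p=x^p-1$ in characteristic $p$ is exactly what transports the Frobenius of the graded algebra $\bigoplus I^i/I^{i+1}$ to the $p$-th power map on $G$, and the iteration $(\mathrm{ad}\,\tilde x)^{p^s}=\mathrm{ad}\,(\widetilde{x^{p^s}})$ gives the ad-nilpotency claim. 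One small point deserves explicit care: with the paper's convention that $\widetilde{x^p}$ lives in $D_j/D_{j+1}$ for the \emph{largest} $j$ with $x^p\in D_j$, the displayed identity should really be read with $\widetilde{x^p}$ meaning the class of $x^p$ in $D_{pi}/D_{pi+1}$ (equivalently, the class of $(x-1)^p$ in $I^{pi}/I^{pi+1}$), which may vanish even when $x^p\neq 1$; in that degenerate case the literal statement would equate a zero operator with a possibly nonzero one. This is an imprecision inherited from the statement rather than a flaw in your argument, and it does not affect the ``in particular'' clause, which is the only part the paper actually uses. Your closing remark on general $q$ matches the paper's own comment following the lemma that only the $p$-part of $q$ matters.
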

We note that $q$ in Lemma \ref{5} does not need to be a 
$p$-power. In fact it is easy to see that if $p^s$ is the 
maximal $p$-power dividing $q$, then $\tilde x$ is 
ad-nilpotent of index at most $p^s$. Combining Lemma \ref{4} 
and Lemma \ref{5}, one obtains
\begin{lemma}\label{45} Let $x$ be an element of a profinite 
group $G$ for which there exists a natural number $q$ such 
that $x^q$ is Engel. Then ${\tilde x}$ is ad-nilpotent.
\end{lemma}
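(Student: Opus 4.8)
The plan is to follow the recipe suggested by the statement itself, namely to combine Lemma \ref{4} with the iterated form of Lemma \ref{5}, the new ingredient being a reduction of the arbitrary exponent $q$ to its $p$-part. First I would factor $q=p^s m$ with $(m,p)=1$. Since $x^q$ is Engel, Lemma \ref{4} provides an integer $N$ with $(ad\,\widetilde{x^q})^N=0$; the whole task is then to transport this ad-nilpotency from $\widetilde{x^q}$ back to $\tilde x$.

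The mechanism for stripping off the $p'$-part $m$ is that each homogeneous component $D_i/D_{i+1}$ of $L(G)$ is an elementary abelian $p$-group, i.e.\ an $\mathbb{F}_p$-vector space, so that a multiplicative $m$-th power corresponds to multiplication by the scalar $m\bmod p$. Concretely, putting $z=x^{p^s}$ so that $x^q=z^m$, I would check that $\widetilde{z^m}=m\,\tilde z$ in $L(G)$: if $\tilde z\neq0$ then $z\in D_i\setminus D_{i+1}$ for $i=i(z)$ and, since $(m,p)=1$, we have $z^m D_{i+1}=m\,(zD_{i+1})\neq0$, so $\widetilde{z^m}$ is homogeneous of the same degree $i$ and equals $m\,\tilde z$; and if $\tilde z=0$ both sides vanish. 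Thus $\widetilde{x^q}=m\,\widetilde{x^{p^s}}$.

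On the other side, induction on $s$ applied to Lemma \ref{5} (using it with $x^{p^j}$ in the role of $x$) yields $(ad\,\tilde x)^{p^s}=ad\,(\widetilde{x^{p^s}})$. Feeding in the previous paragraph, $ad\,(\widetilde{x^q})=m\,(ad\,\tilde x)^{p^s}$, whence $0=(ad\,\widetilde{x^q})^N=m^N\,(ad\,\tilde x)^{p^sN}$. As $m^N$ is a unit in $\mathbb{F}_p$, this forces $(ad\,\tilde x)^{p^sN}=0$, so $\tilde x$ is ad-nilpotent, as required.

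Since the statement is explicitly advertised as a combination of the two preceding lemmas, I do not anticipate a real obstacle; the only point needing care is the scalar identity $\widetilde{z^m}=m\,\tilde z$, which relies on the elementary abelian structure of the quotients $D_i/D_{i+1}$, together with the harmless degenerate case $z\in\bigcap_i D_i$, in which $(ad\,\tilde x)^{p^s}=ad\,\tilde z=0$ already gives the conclusion.
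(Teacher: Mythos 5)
Your proposal is correct and is exactly the argument the paper intends: the paper states Lemma \ref{45} simply as ``combining Lemma \ref{4} and Lemma \ref{5}'', with the remark preceding it alluding to precisely your reduction of $q$ to its $p$-part via the scalar identity $\widetilde{z^m}=m\,\tilde z$ in the $\mathbb{F}_p$-graded algebra $L(G)$. You have merely written out the details (including the degenerate case $z\in\bigcap_i D_i$) that the authors leave to the reader.
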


Let $H$ be a subgroup of $G$ and $a_1,\dots,a_n\in G$. Let 
$w=w(x_1,\dots,x_n)$ be a nontrivial element of the 
free group with free generators $x_1,\dots,x_n$. Following 
\cite{wize} we say that the law $w=1$ is satisfied on the 
cosets $a_1H,\dots,a_nH$ if $w(a_1h_1,\dots,a_nh_n)=1$ 
for any $h_1,\dots,h_n\in H$. In \cite{wize} Wilson
and Zelmanov proved the following theorem.
\begin{theorem}\label{6} If $G$ is a group which has a
subgroup $H$ of finite index and elements $a_1,\dots,a_n$
such that a law $w=1$ is satisfied on the cosets
$a_1H,\dots,a_nH$, then for each prime $p$ the Lie algebra
$L_p(G)$ is PI.
\end{theorem}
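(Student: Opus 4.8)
The plan is to convert the purely group-theoretic coset law into a genuine polynomial identity for the graded Lie algebra $L_p(G)$, working inside the group algebra over ${\mathbb F}_p$. First I would replace $H$ by its normal core $N=\bigcap_{g\in G}H^g$, which still has finite index; since $a_iN\subseteq a_iH$, the law $w=1$ continues to hold on the cosets $a_1N,\dots,a_nN$, so there is no loss in assuming $H$ normal. The reason $L_p(G)$ is the right object to look at is the Jennings--Quillen description of the associated graded algebra: filtering ${\mathbb F}_p[G]$ by the powers of its augmentation ideal (equivalently, by the Zassenhaus--Jennings--Lazard series used to define $L(G)$), the associated graded ring is isomorphic to the restricted universal enveloping algebra of $L(G)$, and the Lie elements concentrated in degree one generate exactly $L_p(G)$. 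Thus an identity holding in this graded ring, once restricted to the subspace $D_1/D_2$, descends to a Lie identity for $L_p(G)$.

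Next I would extract the identity by a linearization argument. For $h_i\in H$ write $h_i=1+u_i$ with $u_i=h_i-1$ lying in the augmentation ideal, and expand the element $w(a_1h_1,\dots,a_nh_n)-1$ of ${\mathbb F}_p[G]$ as a noncommutative series in the increments $u_1,\dots,u_n$, the fixed elements $a_i$ entering only through constant coefficients. By hypothesis this element vanishes for every choice of $h_i\in H$. Replacing each $h_i$ by its successive powers $h_i,h_i^2,\dots$ produces a Vandermonde-type linear system (after passing, if necessary, to a field extension of ${\mathbb F}_p$ containing enough scalars) whose solution forces each multihomogeneous component of the expansion to vanish separately. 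Passing to the associated graded ring and isolating the lowest-degree component that does not collapse, one obtains a nonzero element $f$ of the free restricted Lie algebra that vanishes under every substitution of elements of $L_p(G)$, that is, a polynomial identity in the sense required.

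The hard part will be guaranteeing that this extraction yields a genuinely nonzero Lie identity rather than a vacuous one. Two difficulties have to be overcome. First, the coset representatives $a_1,\dots,a_n$ are fixed and their filtration degrees are not under our control, so one must argue that their contribution to the leading graded component cannot annihilate the term coming from the varying increments $u_i$; keeping track of which monomials survive in the lowest surviving degree is the combinatorial heart of the matter, and it is exactly where the structure of the free Lie algebra and Zelmanov-style bookkeeping are needed. Second, the law is assumed only on cosets of the finite-index subgroup $H$, whereas the desired identity must hold on all of $L_p(G)$; here finite index is essential, since it ensures that the increments $h_i-1$ with $h_i\in H$ already generate the relevant part of the filtration, so that the identity obtained from $H$ propagates to the whole algebra. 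Establishing the nontriviality of $f$ while respecting these constraints is, I expect, the main obstacle, and it is precisely this point that makes the Wilson--Zelmanov theorem deep rather than formal.
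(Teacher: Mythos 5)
The paper does not prove this statement: Theorem \ref{6} is quoted verbatim from Wilson and Zelmanov \cite{wize}, so there is no internal proof to compare against. Judged on its own terms, your sketch correctly identifies the framework of the actual Wilson--Zelmanov argument --- work in ${\mathbb F}_p[G]$ filtered by powers of the augmentation ideal, invoke Quillen's identification of the associated graded ring with the restricted enveloping algebra of $L(G)$, write $h_i=1+u_i$ and expand $w(a_1h_1,\dots,a_nh_n)-1$ --- but it is an outline of a strategy rather than a proof, and the two places where all the work lies are exactly the places you leave open.

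First, the linearization step as you describe it does not work. Substituting the powers $h_i,h_i^2,\dots$ gives increments $h_i^j-1=(1+u_i)^j-1=\sum_{l\ge 1}\binom{j}{l}u_i^l$, not scalar multiples $ju_i$, so no Vandermonde system in the multihomogeneous components arises; and ``passing to a field extension of ${\mathbb F}_p$'' is not available, because the hypothesis $w(a_1h_1,\dots,a_nh_n)=1$ holds only when the $h_i$ range over the subgroup $H$, which is not an ${\mathbb F}_p$-subspace of the group algebra, let alone a subspace over an extension field. The separation of components must instead exploit that $H$ is a subgroup (substitute $h_i\mapsto h_ih_i'$ and use inclusion--exclusion), and even then one controls the relevant components only modulo higher filtration terms. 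Second, and more seriously, you explicitly defer the nontriviality of the extracted identity $f$: showing that the lowest-degree surviving component is a nonzero element of the free Lie ring, uniformly over the unknown filtration degrees of the fixed representatives $a_1,\dots,a_n$, is the entire content of the theorem, and as written your argument could equally well terminate in the vacuous identity $0=0$. Since this is a deep external result that the paper deliberately imports as a black box, the appropriate course is to cite \cite{wize} rather than to attempt a reproof.
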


\section{Proofs of the main results}

Throughout the rest of the paper $k$ stands for an arbitrary but fixed positive integer and $\bar{G} = G \times \ldots \times G$ ($k+1$ factors). First we will establish that if under the hypothesis of Theorem \ref{b} all numbers $q$ are coprime with the primes in $\pi(G)$, then $\gamma_k(G)$ is locally nilpotent. We start with the case where $G$ is a pro-$p$ group.
\begin{proposition}\label{prop} Let $G$ be a finitely generated pro-$p$ group such that for any ${\gamma_k}$-value $x\in G$ there exists a natural $p'$-number $q$ such that $x^q$ is Engel. Then $\gamma_k(G)$ is locally nilpotent.
\end{proposition}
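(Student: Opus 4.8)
The plan is to reduce the statement to a finitely generated subgroup and then feed it into the Lie-theoretic machinery of Section 3. By Theorem \ref{NS}, every element of $\gamma_k(G)$ is a product of finitely many $\gamma_k$-values, so any finitely generated closed subgroup of $\gamma_k(G)$ lies inside a subgroup $H=\langle g_1,\dots,g_s\rangle$ generated by finitely many $\gamma_k$-values $g_1,\dots,g_s$ of $G$. Since subgroups of nilpotent groups are nilpotent, it is enough to show that each such $H$, which is a finitely generated pro-$p$ group, is nilpotent. For each $i$ the hypothesis provides a $p'$-number $q_i$ with $g_i^{q_i}$ Engel; this Engel condition is inherited by $H$, so Lemma \ref{45}, applied inside $H$, shows that the image of each $g_i$ in the Lie algebra $L(H)$ is ad-nilpotent. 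Because the $g_i$ generate $H$, their images generate the degree-one component $D_1(H)/D_2(H)$, and therefore $L_p(H)$ is generated by finitely many ad-nilpotent elements.

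To apply Zelmanov's Theorem \ref{1} I must know that $L_p(H)$ is PI, and I would obtain this from a coset identity produced by a Baire category argument on $\bar G=G\times\dots\times G$ ($k+1$ factors), in the spirit of Lemma \ref{pi-n}: the first $k$ coordinates carry the arguments of $\gamma_k$ and the last carries the Engel variable. For each positive integer $m$ and each $p'$-number $q$ set
\[
X_{m,q}=\{(x_1,\dots,x_k,y)\in\bar G:\ [\gamma_k(x_1,\dots,x_k)^{q},{}_m y]=1\}.
\]
Each $X_{m,q}$ is closed and, by the hypothesis applied to the $\gamma_k$-value $\gamma_k(x_1,\dots,x_k)$, these sets cover $\bar G$. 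Baire's theorem yields some $X_{m,q}$ with nonempty interior, that is, an open subgroup $V\leq G$ and elements $a_1,\dots,a_{k+1}$ on whose cosets the law $[\gamma_k(x_1,\dots,x_k)^{q},{}_m x_{k+1}]=1$ holds identically. Theorem \ref{6} then gives that $L_p(G)$ is PI, and the standard comparison of the Lie algebras of $H$ and $G$ shows that $L_p(H)$ is PI as well.

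The remaining hypothesis of Theorem \ref{1} is that every commutator in the generators of $L_p(H)$ be ad-nilpotent, and this is the main obstacle. Such a commutator is the image of a group commutator $[g_{i_1},\dots,g_{i_t}]$, which belongs to $\gamma_k(G)$ but need not itself be a $\gamma_k$-value, so neither the hypothesis nor Lemma \ref{45} applies to it directly. The crux is therefore to propagate the property ``some $p'$-power is Engel'' from $\gamma_k$-values to the general elements of $\gamma_k(G)$ furnished by Theorem \ref{NS}; I expect to extract this from the coset identity above, using $\bar G$ together with Lemmas \ref{ratio} and \ref{coen} to pass from single values to their products and commutators. Granting this, every commutator in the generators is ad-nilpotent, and Theorem \ref{1} yields that $L_p(H)$ is nilpotent.

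Finally, since $H$ is a finitely generated pro-$p$ group with $L_p(H)$ nilpotent, Lazard's Theorem \ref{3} shows that $H$ is $p$-adic analytic; in particular $H$ is linear and Noetherian, so its Engel elements lie in the closed Hirsch--Plotkin radical $R$. Thus $g_i^{q_i}\in R$ for each $i$. In the pro-$p$ quotient $H/R$ the image of $g_i$ then has order dividing the $p'$-number $q_i$; as a nontrivial element of a pro-$p$ group cannot have order coprime to $p$, we conclude $g_i\in R$ for all $i$, whence $H=\langle g_1,\dots,g_s\rangle\leq R$. Being finitely generated and locally nilpotent, $H$ is nilpotent. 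As $H$ was an arbitrary finitely generated subgroup of $\gamma_k(G)$, it follows that $\gamma_k(G)$ is locally nilpotent.
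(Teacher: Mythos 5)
Your overall strategy coincides with the paper's (Baire category to produce a coset identity, Theorem \ref{6} for PI, Lemma \ref{45} for ad-nilpotency, then Theorems \ref{1} and \ref{3}, linearity, and an Engel/Plotkin endgame), but you have left the decisive step unproved. You flag as ``the main obstacle'' that a group commutator $[g_{i_1},\dots,g_{i_t}]$ in the generators need not itself be a $\gamma_k$-value, propose to extract its Engel property somehow from the coset identity, and then proceed by ``granting this''. In fact no propagation argument is needed: any commutator in $\gamma_k$-values is again a $\gamma_k$-value. Indeed, if $u=[x_1,\dots,x_k]$ is a $\gamma_k$-value and $v$ is arbitrary, then $[u,v]=[x_1,\dots,x_k,v]$ is a $\gamma_{k+1}$-value, and every $\gamma_{k+1}$-value $[z_1,\dots,z_{k+1}]=[[z_1,z_2],z_3,\dots,z_{k+1}]$ is a $\gamma_k$-value; induction on weight then shows that every commutator in $g_1,\dots,g_s$ is a $\gamma_k$-value, so the hypothesis applies to it directly and Lemma \ref{45} gives ad-nilpotency of the corresponding Lie commutator. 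This is exactly the paper's one-line justification (``for any group commutator $h$ in $a_1,\dots,a_s$ there exists a $p'$-number $q$ such that $h^q$ is Engel''). As written, your argument has a hole precisely at the point where the hypothesis on $\gamma_k$-values must be exploited.

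Two smaller issues. First, your Baire sets $X_{m,q}$ have the Engel bracket reversed: the hypothesis says $\gamma_k(x_1,\dots,x_k)^q$ is Engel, i.e. $[y,{}_m\,\gamma_k(x_1,\dots,x_k)^q]=1$, not $[\gamma_k(x_1,\dots,x_k)^q,{}_m\,y]=1$; with your definition the sets need not cover $\bar G$. Second, you run the Baire argument on $\bar G$ and then invoke a ``standard comparison'' to pass from $L_p(G)$ being PI to $L_p(H)$ being PI; this transfer is not automatic (the dimension subgroups of $H$ are not in general the intersections with those of $G$, and a coset identity for an open subgroup of $G$ need not restrict to cosets of an open subgroup of $H$ with representatives in $H$). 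The paper avoids this by applying Baire directly to $\bar K$, the power of the finitely generated subgroup under consideration, and you should do the same. Your endgame via the Hirsch--Plotkin radical of the $p$-adic analytic group $H$ is a workable variant of the paper's Tits-plus-Plotkin argument, and your use of the fact that the $g_i^{q_i}$ still generate $H$ because the $q_i$ are $p'$-numbers is the same device the paper employs.
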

\begin{proof} Since any element of $\gamma_k(G)$ can be expressed as a product of finitely many $\gamma_k$-values (Theorem \ref{NS}), it is sufficient to show that any subgroup $K$ generated by finitely many $\gamma_k$-values $a_1,\dots,a_s$ is nilpotent. 

Let $q_1,\dots,q_s$ be $p'$-numbers such that the elements $a_1^{q_1},\dots,a_s^{q_s}$ are Engel in $K$. It is clear that $K=\langle a_1^{q_1},\dots,a_s^{q_s}\rangle$. For each pair of positive integers $i,j$, where $j$ is a $p'$-number, we set $$S_{i,j}=\{(g_1,g_2,\dots,g_{k+1})\in\bar K; \ [g_1,{}_i[g_2,\dots,g_{k+1}]^j]=1\}.$$ Since the sets $S_{i,j}$ are closed in $\bar K$ and cover $\bar K$, by Baire's category theorem at least one of them contains non-empty interior. Therefore there exist an open subgroup $H$ of $K$, elements $b,b_1,\dots,b_k\in K$ and integers $n,q$ such that the cosets $bH,b_1H,\dots,b_kH$ satisfy the law $[y,{}_n[x_1,\dots,x_k]^q]\equiv1$.

By Theorem \ref{6} $L=L_p(K)$ is PI. Let $\tilde a_1,\dots, \tilde a_s$
be the homogeneous elements of $L$ corresponding to $a_1,\dots,a_s$. Since for any group commutator $h$ in $a_1,\dots,a_s$ there exists a $p'$-number $q$ such that $h^q$ is Engel, Lemma \ref{45} shows that any Lie commutator in $\tilde a_1,\dots,\tilde a_s$ is ad-nilpotent. Zelmanov's Theorem \ref{1} now tells us that $L$ is nilpotent. Therefore $K$ is $p$-adic analytic (Theorem \ref{3}). Obviously $K$ cannot contain a subgroup isomorphic to the free discrete group of rank two, so by the Tits' Alternative \cite{tits} $K$ has a soluble subgroup of finite index. Choose a dense subgroup $D$ of $K$ which is generated (as an abstract group) by finitely many Engel elements. By Plotkin's theorem \cite[Theorem 7.34]{rob} $D$ is nilpotent. It follows that $K$ is nilpotent, as well. The proof is complete. 
\end{proof}
\begin{theorem}\label{old} Let $\pi$ be a set of primes and $G$ a finitely generated pro-$\pi'$ group such that for any ${\gamma_k}$-value $x\in G$ there exists a natural $\pi$-number $q$ such that $x^q$ is Engel. Then $\gamma_k(G)$ is locally nilpotent.
\end{theorem}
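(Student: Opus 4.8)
The plan is to reduce, via Theorem \ref{NS}, to proving that every closed subgroup $K=\langle a_1,\dots,a_s\rangle$ generated by finitely many $\gamma_k$-values is nilpotent. Fixing $\pi$-numbers $q_i$ with $a_i^{q_i}$ Engel, I first note that since $G$ is pro-$\pi'$ each $q_i$ is coprime to the order of $a_i$, so $\overline{\langle a_i^{q_i}\rangle}=\overline{\langle a_i\rangle}$ and hence $K=\langle a_1^{q_1},\dots,a_s^{q_s}\rangle$; moreover by Lemma \ref{ratio} the elements $a_i^{q_i}$ are themselves $\gamma_k$-values. Thus $K$ is generated by finitely many Engel $\gamma_k$-values.

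The first substantial step is to show that $K$ is pronilpotent, which I would argue at the level of finite quotients. In any finite quotient $\bar K$ (a finite $\pi'$-group) each $\bar a_i^{q_i}$ is an Engel element and hence lies in the Fitting subgroup $F(\bar K)$ by Baer's theorem; since $q_i$ is coprime to $|\bar K|$ we get $\bar a_i\in\overline{\langle\bar a_i^{q_i}\rangle}\le F(\bar K)$, so $\bar K=F(\bar K)$ is nilpotent. Therefore $K=\prod_{p\in\pi'}K_p$ is the Cartesian product of its Sylow subgroups, each $K_p$ being a finitely generated pro-$p$ group generated by the $\gamma_k$-values $(a_i)_p$ whose $q_i$-th powers ($q_i$ now a $p'$-number) are Engel; so Proposition \ref{prop} already shows that every $K_p$ is nilpotent.

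The heart of the matter, and the main obstacle, is that a finitely generated pronilpotent group whose Sylow subgroups have unbounded nilpotency class need not be nilpotent, so I must bound the class of $K_p$ uniformly in $p$. Two uniform inputs are needed. First, a Baire category argument in $\bar K=K\times\dots\times K$ (exactly as in the proof of Proposition \ref{prop}) produces an open subgroup $H$ and a law $[y,{}_n[x_1,\dots,x_k]^q]\equiv 1$ on cosets of $H$; by Theorem \ref{6} this forces $L_p(K)$, which coincides with $L_p(K_p)$, to be PI of a degree $d$ that does not depend on $p$. Second, and this is the key device, I would \emph{project this coset law onto each Sylow subgroup}: writing $\tau$ for the finite set of primes dividing $[K:H]$, one has $K_p\le H$ for every $p\in\tau'$, and projecting the identity to $K_p$ yields $[u,{}_n[w_1,\dots,w_k]^q]=1$ for all $u,w_1,\dots,w_k\in K_p$. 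In other words, for every $p\in\tau'$ and every $\gamma_k$-value $z$ of $K_p$ the element $z^{q}$ is $n$-Engel, with $n$ and $q$ independent of $z$ and of $p$.

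This uniform Engel law is exactly what is required to run Zelmanov's theorem with bounds. For $p\in\tau'$ the algebra $L_p(K_p)$ is generated by at most $s$ elements and is PI of degree $\le d$; every Lie commutator in these generators is the homogeneous component of a group commutator $h$ that is again a $\gamma_k$-value, so $h^{q}$ is $n$-Engel, and since $\widetilde{h^{q}}=q\,\tilde h$ with $q$ invertible modulo $p$, the element $\tilde h$ is ad-nilpotent of index bounded in terms of $n$ (by the quantitative form of Lemma \ref{4}). Hence the quantitative version of Zelmanov's Theorem \ref{1} bounds the nilpotency class of $L_p(K_p)$, and so of $K_p$, by a function of $s$, $n$ and $d$, uniformly over $p\in\tau'$, while the finitely many primes in $\tau$ are controlled by Proposition \ref{prop} and contribute only a fixed extra bound. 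Consequently there is an integer $N$ with $\gamma_{N+1}(K_p)=1$ for all $p$, whence $\gamma_{N+1}(K)=\prod_p\gamma_{N+1}(K_p)=1$ and $K$ is nilpotent. The point demanding the most care is securing uniformity across \emph{all} Lie commutators, not merely the generators, and across all primes at once, and this is precisely what the projected coset law delivers.
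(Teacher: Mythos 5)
Your reduction steps (Theorem \ref{NS} to a subgroup $K$ generated by finitely many $\gamma_k$-values; Baer's theorem in finite quotients to get $K$ pronilpotent; the Baire category argument on $\bar K$ and the projection of the resulting coset law onto the Sylow subgroups for primes outside the finite set $\tau$ of divisors of $[K:H]$) all match the paper, and you have correctly identified the real difficulty: nilpotency of each $K_p$ separately does not give nilpotency of $K$, so some uniformity over the primes is needed. The problem is with how you resolve it. Your concluding step asserts that a bound on the nilpotency class of $L_p(K_p)$ yields a bound on the nilpotency class of $K_p$. That implication is false. Nilpotency of $L_p(G)$ only gives, via Lazard's criterion (Theorem \ref{3}), that $G$ is $p$-adic analytic; it does not make $G$ nilpotent, let alone of class equal to that of $L_p(G)$. (For instance, for $G=\mathbb{Z}_p\ltimes\mathbb{Z}_p$ with the generator of the first factor acting as multiplication by $1+p$, the algebra $L_p(G)$ is abelian while $G$ is not nilpotent.) This is visible in the paper's own Proposition \ref{prop}: after proving $L$ nilpotent the authors still need Tits' alternative and Plotkin's theorem to conclude nilpotency of the group, and those tools give no control on the class. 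In addition, the quantitative forms you invoke of Theorem \ref{1}, Lemma \ref{4} and Theorem \ref{6} (PI degree and ad-nilpotency indices independent of $p$, a class bound in Zelmanov's theorem depending only on the data) are not established anywhere in the paper, so even the bound on $L_p(K_p)$ is not available as stated.

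The repair is exactly the point where the paper diverges from you. Once the coset law is projected onto $O_{\tau'}(K)$, that whole subgroup satisfies the identity $[y,{}_n[x_1,\dots,x_k]^q]\equiv 1$, and since $q$ is a $\pi$-number while $O_{\tau'}(K)$ is a pro-$\pi'$ group, Lemma \ref{ratio} lets you delete the exponent $q$: every $\gamma_k$-value of $O_{\tau'}(K)$ is genuinely $n$-Engel. Now Theorem 1.1 of \cite{shu} applies to $O_{\tau'}(K)$ \emph{as a whole} and gives $\gamma_k(O_{\tau'}(K))$ locally nilpotent in one stroke, with no prime-by-prime bookkeeping; the finitely many primes in $\tau$ are then handled individually by Proposition \ref{prop}, and a final application of Plotkin's theorem to a dense subgroup generated by the finitely many Engel elements $a_i^{q_i}$ (legitimate because $K$ modulo the locally nilpotent subgroup $\gamma_k(K)$ is nilpotent, so $K$ is a radical group) yields nilpotency of $K$. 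In short: the uniformity you are after is supplied by the cited Engel-commutator theorem applied to the cofinite Sylow product, not by a quantitative Lie-algebra argument applied to each Sylow subgroup.
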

\begin{proof} It will be convenient first to prove the theorem under the additional hypothesis that $G$ is pronilpotent. 

For each pair of positive integers $i,j$, where $j$ is a $\pi$-number, we set $$S_{i,j}=\{(g_1,g_2,\dots,g_{k+1})\in\bar G; \ [g_1,{}_i[g_2,\dots,g_{k+1}]^j]=1\}.$$ Arguing as in the proof of Proposition \ref{prop} we deduce that there exist an open subgroup $H$ of $G$, elements $b,b_1,\dots,b_k\in G$ and integers $n,q$ such that the cosets $bH,b_1H,\dots,b_kH$ satisfy the law $[y,{}_n[x_1,\dots,x_k]^q]\equiv1$. Let $m$ be the index of $H$ in $G$. We write $J$ for the product of the Sylow subgroups $P_1,\dots,P_r$ of $G$ corresponding to the primes dividing $m$ and $K$ for the product of the Sylow subgroups of $G$ corresponding to the primes not dividing $m$. Since $G=J\times K$ and since $G=JH$, without loss of generality we can assume that $b,b_1,\ldots,b_k\in J$ and it follows that $K$ satisfies the law $[y,{}_n[x_1,\dots,x_k]^q]\equiv1$. Lemma \ref{ratio} shows that $K$ actually satisfies the law $[y,{}_n[x_1,\dots,x_k]]\equiv1$. Hence $\gamma_k(K)$ is locally nilpotent by \cite[Theorem 1.1]{shu}. Further, Proposition \ref{prop} shows that $\gamma_k(P_i)$ is locally nilpotent for every $i=1,\dots,r$. We have $\gamma_k(G) = \gamma_k(P_1) \times \ldots \times \gamma_k(P_r) \times \gamma_k(K)$. Thus, it follows that in the case where $G$ is pronilpotent $\gamma_k(G)$ is locally nilpotent.

Now we drop the assumption that $G$ is pronilpotent. Since any element of $\gamma_k(G)$ can be expressed as a product of finitely many $\gamma_k$-values (Theorem \ref{NS}), it is sufficient to prove that any subgroup generated by finitely many $\gamma_k$-values $a_1,\dots,a_t$ is nilpotent. Set $H=\langle a_1,\dots,a_t\rangle$. We need to show that $H$ is nilpotent. Let $q_1,\dots,q_t$ be positive $\pi$-numbers such that $a_1^{q_1},\dots,a_t^{q_t}$ are Engel. It is clear that $H=\langle a_1^{q_1},\dots,a_t^{q_t}\rangle$. Since all Engel elements of a finite group lie in the Fitting subgroup \cite[12.3.7]{Rob}, it follows that $H$ is pronilpotent. Hence, by the previous paragraph, $\gamma_k(H)$ is locally nilpotent. The theorem now follows from the result of Plotkin just as in the proof of Proposition \ref{prop}.
\end{proof}
Now we will deal with the situation where the numbers $q$ are not necessarily coprime with the primes in $\pi(G)$.

\begin{proposition} \label{nq}
Let $k$ be positive integer and $G$ a finitely generated pronilpotent group in which for every $\gamma_k$-value $x$ there is an integer $q$ such that $x^q$ is Engel. Then $\gamma_k(G)$ is locally virtually nilpotent. 
\end{proposition}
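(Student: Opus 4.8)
The plan is to graft the reduction of Theorem \ref{old} onto a refined treatment of the finitely many ``bad'' Sylow subgroups, and then to handle those by the Lie-theoretic method of Proposition \ref{prop}, the essential novelty being that this time the available Engel elements generate only an \emph{open} subgroup.

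First I would repeat the Baire category argument from the proof of Theorem \ref{old}. For positive integers $i,j$ put $S_{i,j}=\{(g_1,\dots,g_{k+1})\in\bar G:\ [g_1,{}_i[g_2,\dots,g_{k+1}]^j]=1\}$; these sets are closed and, by hypothesis, cover $\bar G$, so some $S_{n,q}$ has non-empty interior. This yields an open subgroup $H$, elements $b,b_1,\dots,b_k$ and integers $n,q$ such that the cosets $bH,b_1H,\dots,b_kH$ satisfy the law $[y,{}_n[x_1,\dots,x_k]^q]\equiv1$. Let $m=[G:H]$ and let $\sigma$ be the finite set of primes dividing $mq$. Writing $G=J\times K$ with $J=\prod_{p\in\sigma}P_p$ and $K=O_{\sigma'}(G)$, exactly as in Theorem \ref{old} one gets $K\le H$ and $K$ satisfies the identity $[y,{}_n[x_1,\dots,x_k]^q]\equiv1$. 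Since $\sigma$ contains every prime dividing $q$, the integer $q$ is coprime to $\pi(K)$, so Lemma \ref{ratio} upgrades this to $[y,{}_n[x_1,\dots,x_k]]\equiv1$; hence $\gamma_k(K)$ is locally nilpotent by \cite[Theorem 1.1]{shu}.

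It remains to treat the finitely many factors $P_p$, $p\in\sigma$. Each is a finitely generated pro-$p$ group in which every $\gamma_k$-value has an Engel power (a $\gamma_k$-value of $P_p$ is one of $G$, and an Engel element of $G$ is Engel in the subgroup $P_p$), so it suffices to prove that such a pro-$p$ group $P$ has $\gamma_k(P)$ locally virtually nilpotent. By Theorem \ref{NS} it is enough to show that $K=\langle a_1,\dots,a_s\rangle$, generated by finitely many $\gamma_k$-values, is virtually nilpotent. Arguing exactly as in Proposition \ref{prop} — the Baire argument inside $\bar K$ gives, via Theorem \ref{6}, that $L_p(K)$ is PI, while Lemma \ref{45}, applied to the group commutators in the $a_i$ (each of which is again a $\gamma_k$-value), makes every Lie commutator in the generators ad-nilpotent — Zelmanov's Theorem \ref{1} shows $L_p(K)$ is nilpotent, Theorem \ref{3} shows $K$ is $p$-adic analytic, and the law produced by Baire's theorem forbids a free discrete subgroup of rank two, so the Tits alternative \cite{tits} shows $K$ is virtually soluble. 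Here the argument departs from Proposition \ref{prop}: choosing $q_i$ with $a_i^{q_i}$ Engel and setting $K_0=\langle a_1^{q_1},\dots,a_s^{q_s}\rangle$, I would observe that, since $K$ is $p$-adic analytic and each $q_i$ is invertible in the field $\mathbb{Q}_p$, the tangent vector of $a_i^{q_i}$ is $q_i$ times that of $a_i$, so the $\mathbb{Q}_p$-Lie subalgebra generated by the tangent vectors of the $a_i^{q_i}$ coincides with the one generated by the tangent vectors of the $a_i$, namely the full Lie algebra of $K$. Hence $K_0$ has the same dimension as $K$, i.e. $K_0$ is open in $K$. Being generated by finitely many Engel elements, $K_0$ is nilpotent by Plotkin's theorem \cite[Theorem 7.34]{rob}, just as in Proposition \ref{prop}. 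Thus $K$ has an open nilpotent subgroup and is virtually nilpotent, so $\gamma_k(P)$ is locally virtually nilpotent.

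Finally, $G=K\times\prod_{p\in\sigma}P_p$ gives $\gamma_k(G)=\gamma_k(K)\times\prod_{p\in\sigma}\gamma_k(P_p)$, a direct product of one locally nilpotent factor and finitely many locally virtually nilpotent factors; since a finitely generated subgroup of such a product embeds in the product of its projections and a product of finitely many virtually nilpotent groups is virtually nilpotent, $\gamma_k(G)$ is locally virtually nilpotent. I expect the main obstacle to be precisely the pro-$p$ step: in contrast with Proposition \ref{prop}, the Engel elements $a_i^{q_i}$ now generate only a proper open subgroup of $K$, and the heart of the matter is to confirm, through the $\mathbb{Q}_p$-Lie algebra of the $p$-adic analytic group $K$, that this subgroup is genuinely open — this is exactly what turns Plotkin's theorem into a conclusion of virtual, rather than full, nilpotency.
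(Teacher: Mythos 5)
Your reduction to the Sylow subgroups for the primes dividing $mq$ and your treatment of the complementary Hall subgroup $K=O_{\sigma'}(G)$ match the paper, as does the Lie-theoretic chain (Theorems \ref{6}, \ref{1}, \ref{3} and the Tits alternative) in the pro-$p$ case. The gap is in your final step, where you claim that $K_0=\langle a_1^{q_1},\dots,a_s^{q_s}\rangle$ is open in $K$ because the tangent vectors of the $a_i^{q_i}$ generate the $\mathbb{Q}_p$-Lie algebra of $K$. The Lie algebra of a compact $p$-adic analytic group is not in general generated by the tangent vectors of a topological generating set: torsion generators contribute a zero tangent vector while still being needed to generate the group. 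The infinite dihedral pro-$2$ group $D=\mathbb{Z}_2\rtimes C_2=\langle s,s'\rangle$, generated by two involutions, satisfies the hypotheses of your pro-$p$ step with $k=1$ (every translation is $2$-Engel, and $s^2=s'^2=1$ is trivially Engel), is $2$-adic analytic with one-dimensional Lie algebra, and yet $K_0=\langle s^2,s'^2\rangle=1$ is certainly not open. So the subgroup generated by the prescribed Engel powers of the chosen generators can fail badly to be open, and the assertion that it has full dimension is false. (The case $k=1$ cannot be set aside: Proposition \ref{nq} is invoked with $k=1$ in the proof of Theorem \ref{a}.)

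What the paper does instead is pass to the closed subgroup $E$ of $M=\langle a_1,\dots,a_t\rangle$ generated by \emph{all} Engel elements of $G$ contained in $M$ --- in the dihedral example this picks up the entire translation subgroup, not merely the trivial powers of the two generators. Every $\gamma_k$-value of $M$ has an Engel power lying in $E$, so every $\gamma_k$-value of $M/E$ has finite $p$-power order, and Lemma \ref{finite} forces $M/E$ to be finite; hence $E$ is open and finitely generated, Lemma \ref{BBT} extracts finitely many Engel generators of $E$, and Plotkin's theorem gives nilpotency of $E$. This application of Lemma \ref{finite} is the essential idea your proposal is missing; without it, or some other argument showing that the Engel elements of $M$ generate an open subgroup, the proof does not close.
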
 
\begin{proof} For each pair of positive integers $i,j$ we set $$S_{i,j}=\{(g_1,\dots,g_{k+1})\in\bar G; \ [g_1,{}_i[g_2,\dots,g_{k+1}]^j]=1\}.$$ Arguing as in the proof of Proposition \ref{prop} we deduce that there exist an open subgroup $H$ of $G$, elements $b,b_1,\dots,b_k\in G$ and integers $n,q$ such that $$[bh, {}_n [b_1h_1, \ldots,b_kh_k]^q] = 1$$ for every $h,h_1,\ldots,h_k \in H$. Let $m$ be the index of $H$ in $G$. We write $J$ for the product of the Sylow subgroups of $G$ corresponding to the primes dividing $m$ and $K$ for the product of the Sylow subgroups of $G$ corresponding to the primes not dividing $m$. Since $G=J\times K$ and since $G=JH$, without loss of generality we can assume that $b,b_1,\ldots,b_k \in J$ and it follows that $K$ satisfies the law $[y,{}_n[x_1,\dots,x_k]^q]\equiv1$. Let $\pi$ be the set of prime divisors of $q$. Then $\gamma_k(O_{\pi'}(K))$ is locally nilpotent by Theorem \ref{old}. 

Now it suffices to show that $\gamma_k(P)$ is locally virtually nilpotent for every Sylow subgroup $P$ corresponding to a prime dividing $mq$. There is no loss of generality in assuming that $G$ is pro-$p$ group. Since every element of $\gamma_k(G)$ is a product of finitely many $\gamma_k$-values, it is sufficient to prove that any subgroup generated by finitely many $\gamma_k$-values is virtually nilpotent. Let $a_1,\dots,a_t$ be $\gamma_k$-values in $G$ and $M=\langle a_1,\dots,a_t \rangle$. Our argument will now imitate parts of the proof of Proposition \ref{prop}. By Theorem \ref{6} $L = L_p(M)$ is PI. Let $\tilde a_1,\dots, \tilde a_s$ be the homogeneous elements of $L$ corresponding to $a_1,\dots,a_s$. Since for any group commutator $h$ in $a_1,\dots,a_s$ there exists a positive integer $q$ such that $h^q$ is Engel, Lemma \ref{45} shows that any Lie commutator in $\tilde a_1,\dots,\tilde a_s$ is ad-nilpotent. Theorem \ref{1} now tells us that $L$ is nilpotent. Therefore, $M$ is $p$-adic analytic (Lemma \ref{3}). Obviously $M$ cannot contain a subgroup isomorphic to the free discrete group of rank two, so by the Tits' Alternative \cite{tits} $M$ has a soluble subgroup of finite index. Let $E$ be the closed subgroup of $M$ generated by all Engel elements in $G$ contained in $M$. Lemma \ref{finite} implies that $M/E$ is finite and therefore $E$ is finitely generated. Since $E$ is generated by all Engel elements contained in $M$ and since $E$ is finitely generated, Lemma \ref{BBT} implies that $E$ is generated by finitely many Engel elements. Choose an abstract dense subgroup $D$ in $E$ generated by finitely many Engel elements. By Plotkin's theorem \cite[Theorem 7.34]{rob}, $D$ is nilpotent.  Hence, also $E$ is nilpotent. The result follows.
\end{proof} 

The proof of Theorem \ref{a} is now easy.

\begin{proof}[Proof of Theorem \ref{a}] Recall that $G$ is a profinite group in which for every element $x\in G$ there exists a natural number $q=q(x)$ such that $x^q$ is Engel. We need to show that finitely generated subgroups of $G$ are virtually nilpotent.

Let $H$ be a finitely generated closed subgroup of $G$, and let $K$ be the subgroup generated by all Engel elements in $H$. By the hypothesis $H/K$ is a finitely generated periodic profinite group. According to Zelmanov's Theorem \cite{ze}, $H/K$ is finite. In particular, $K$ is a finitely generated pronilpotent group. By Proposition \ref{nq} applied with $k=1$, $K$ is virtually nilpotent. This completes the proof.      
\end{proof}

The proof of Theorem \ref{b} will be only somewhat more complicated.
\begin{proof}[Proof of Theorem \ref{b}] Recall that $G$ is a finitely generated profinite group in which some $p$-power of any ${\gamma_k}$-value is Engel. We wish to show that finitely generated subgroups of $\gamma_k(G)$ are virtually nilpotent.

The case where $k=1$ is covered by Theorem \ref{a}. So we will assume that $k \geq2$. Since any element of $\gamma_k(G)$ can be expressed as a product of finitely many $\gamma_k$-values (Theorem \ref{NS}), it is sufficient to prove that any subgroup generated by finitely many $\gamma_k$-values $a_1,\dots,a_t$ is virtually nilpotent. Let $H=\langle a_1,\dots,a_t\rangle$, and let $K$ be the closed subgroup of $H$ generated by all Engel elements of $H$. By Lemma \ref{finite} $H/K$ is a finite $p$-group and so $K$ is finitely generated. We will show that $K$ is nilpotent. It is clear that $K$ is pronilpotent and $O_{p'}(H)=O_{p'}(K)$. It follows that $O_{p'}(H)$ is finitely generated. Choose a Sylow $p$-subgroup $P$ in $H$. Since $H/O_{p'}(H)$ is isomorphic with $P$, we conclude that $P$ is finitely generated. Let $P=\langle h_1,\dots,h_r\rangle$ and $O_{p'}(H)=\langle g_1,\dots,g_s\rangle$. By Lemma \ref{pdash} $[O_{p'}(H),P]$ is generated by conjugates of $[g_i,{}_{k-1}h_j]$. Of course every element $[g_j,{}_{k-1}h_l]$ is a $\gamma_k$-value. Let $q_{jl}$ be positive $p$-powers such that $[g_j, {}_{k-1}h_l]^{q_{jl}}$ are Engel, for all $j \in \{1,\ldots,s\}$ and $l \in \{1,\ldots,r\}$. 

For each $i=1,\dots,t$ write $a_i=b_ic_i$, where $b_i$ is a generator of the $p'$-part of the procyclic subgroup $\langle a_i\rangle$ and $c_i$ is a generator of the $p$-part of the subgroup $\langle a_i\rangle$. Since $H/[O_{p'}(H),P]$ is pronilpotent, it follows that $O_{p'}(H)$ is generated by $[O_{p'}(H),P]$ and the elements ${b_1},\dots,{b_t}$. Let $q_1,\ldots,q_t$ be $p$-powers such that ${a_i}^{q_i}$ is Engel, for any $i=1,\ldots,t$. Thus, all elements ${b_1}^{q_1},\dots,{b_t}^{q_t}$ are Engel (Lemma \ref{coen}). Further, since all $q_i$ and $q_{jl}$ are $p$-powers, it follows that $O_{p'}(H)$ is generated by $[O_{p'}(H),P]$ and the elements ${b_1}^{q_1},\dots,{b_t}^{q_t}$. So $O_{p'}(H)$ is generated by ${b_1}^{q_1},\dots,{b_t}^{q_t}$ and conjugates of $[g_j,{}_{k-1}h_l]^{q_{jl}}$. Again, since we have only finitely many elements of this form, by Lemma \ref{pi-n} there exist a positive integer $n$ and a finite set of primes $\pi$ such that ${b_1}^{q_1}, \ldots,{b_t}^{q_t}$ and the elements $[g_j, {}_{k-1}h_l]^{q_{jl}}$ are $n$-Engel in their action on $O_{\pi'}(K)$. The subgroup $O_{\pi'}(K)$ is generated by $\pi'$-projections of the conjugates of ${b_1}^{q_1},\dots,{b_t}^{q_t}$ and $[g_j,{}_{k-1}h_l]^{q_{jl}}$. By Lemma \ref{coen} each of such projections is $n$-Engel in $O_{\pi'}(K)$. Since $O_{\pi'}(K)$ is finitely generated, it follows from Lemma \ref{mn} that $O_{\pi'}(K)$ is generated by finitely many $n$-Engel elements. By Theorem \ref{old}, ${\gamma_k}(O_{\pi'}(K))$ is locally nilpotent. Choose an abstract dense subgroup $D$ of $O_{\pi'}(K)$ generated by finitely many $n$-Engel elements. Since ${\gamma_k}(D)$ is locally nilpotent and $D$ is generated by finitely many $n$-Engel elements, we conclude that $D$ is nilpotent (Plotkin's theorem \cite[Theorem 7.34]{rob}). Hence $O_{\pi'}(K)$ is nilpotent as well. It therefore suffices to show that the $r$-Sylow subgroup $R$ of $K$ is nilpotent for any $r\in\pi$. Since $K$ is pronilpotent, $K=R\times O_{r'}(K)$ and so we can pass to the quotient $H/O_{r'}(K)$ and simply assume that $K=R$. Thus, $R$ is generated by finitely many Engel elements. If $r\neq p$, then by Proposition \ref{prop} $\gamma_k(R)$ is locally nilpotent and so by the theorem of Plotkin $R$ must be nilpotent.

Suppose $r=p$. Then $H$ is a pro-$p$ group. In this case we just repeat the final part of the proof of Proposition \ref{nq} and deduce that $H$ is virtually nilpotent. The theorem follows.  
\end{proof}

\end{document}